\newtheorem{theorem}{Theorem}
\newtheorem{thm}{Theorem}[section]
\newtheorem{corollary}[thm]{Corollary}
\newtheorem{lemma}[thm]{Lemma}
\newtheorem{claim}[thm]{Claim}
\theoremstyle{remark}
\newcommand{\1}{\mathbbm{1}}
\title{Reconstructing random graphs from distance queries}
\author{\Large Michael Krivelevich\thanks{School of Mathematical Sciences, Tel Aviv University, Tel Aviv 6997801, Israel. 
\newline
Email: krivelev@tauex.tau.ac.il. 
}, \,\,\,\,\, Maksim Zhukovskii\thanks{The University of Sheffield, Department of Computer Science, Sheffield S1 4DP, UK.\newline Email: m.zhukovskii@sheffield.ac.uk.}}
\date{}
\begin{document}

\maketitle

\begin{abstract}
We estimate the minimum number of distance queries that is sufficient to reconstruct the binomial random graph $G(n,p)$ with constant diameter with high probability. We get a tight (up to a constant factor) answer for all $p>n^{-1+o(1)}$ outside ``threshold windows'' around $n^{-k/(k+1)+o(1)}$, $k\in\mathbb{Z}_{>0}$: with high probability the query complexity equals $\Theta(n^{4-d}p^{2-d})$, where $d$ is the diameter of the random graph. This demonstrates the following non-monotone behaviour: the query complexity jumps down at moments when the diameter gets larger; yet, between these moments the query complexity grows. We also show that there exists a non-adaptive algorithm that reconstructs the random graph with $O(n^{4-d}p^{2-d}\ln n)$ distance queries with high probability, and this is best possible.
\end{abstract}

\section{Introduction}

Reconstruction of graphs was thoroughly studied in many different contexts 
 and has various applications, e.g., in DNA sequencing~\cite{DNA,DNA2,DNA3}, phylogenetics~\cite{phylo1,KLW,phylo3}, and recovering neural networks~\cite{Soudry}. A vast amount of literature is devoted to the average-case problem, i.e. to the reconstruction of random graphs~\cite{Bol_reconstruction,DPZ,HT-balls,JKRS,LM2017,MZ21,MosselRoss,Muller,SpinozaWest}. In particular, the conjecture of Kelly and Ulam~\cite{Kelly,Ulam} which is considered as one of the main reconstruction challenges is known to be true in binomial random graphs whp\footnote{With high probability, that is, with probability tending to 1 as $n\to\infty$.}~\cite{Bol_reconstruction,LM2017,Muller}.

Let $G$ be a connected graph on $[n]:=\{1,\ldots,n\}$, and let $\mathcal{Q}\subset{[n]\choose 2}$ be a set of pairs of vertices. In the usual way, we denote by $d_G(x,y)$ the graph distance between $x$ and $y$ in $G$. 
 Let us say that $\mathcal{Q}$ {\it reconstructs} $G$, if $G$ is the only graph on $[n]$ with distances $d_G(x,y)$ between $\{x,y\}\in\mathcal{Q}$, i.e. for every graph $G'$ on $[n]$ such that $d_{G'}(x,y)=d_{G}(x,y)$ for all  $\{x,y\}\in\mathcal{Q}$, we have $G'=G$. We will also call the graph {\it $\mathcal{Q}$-reconstructible} in this case.

In this paper, we study the model of reconstruction with an access to distance query oracle introduced in~\cite{BEE06}, although its variants were considered long before that for modelling reconstruction of a phylogenetic tree~\cite{Hein,KZZ03,WSSB}. For every queried pair of vertices, the oracle answers the distance between these vertices in the hidden unknown graph $G$. For a queried pair of vertices $\{x,y\}$, we denote the oracle's answer to this query by $d(x,y)$, omitting the unknown graph in the subscript. An adaptive algorithm, at every step, selects next query (a pair of vertices) based on the responses from the oracle to earlier queries. 

Let $q\in\mathbb{N}$.  If there exists an adaptive algorithm $\mathtt{A}$ such that, for the hidden input graph $G$ on $[n]$, it queries at most $q$ pairs of vertices comprising a set $\mathcal{Q}\subset{[n]\choose 2}$ that reconstructs $G$, then we call $G$ {\it $q$-reconstructible by $\mathtt{A}$}. Let us call the minimum $q$ such that $G$ is $q$-reconstructible by some algorithm {\it the (distance) query complexity of $G$}. It is easy to see that, in the worst case, the query complexity equals ${n\choose 2}$: all pairs should be queried in order to reconstruct the graph. Reyzin and Srivastava~\cite{RS} showed that, even for some trees, $\Omega(n^2)$ pairs are required to query. On the other hand, Mathieu and Zhou~\cite{MZ21} presented an algorithm such that whp a uniformly random $d$-regular graph on $[n]:=\{1,\ldots,n\}$ is $\lfloor n\ln^2n\rfloor$-reconstructible by this algorithm, and wondered whether methods similar to those developed in their paper are applicable to sparse binomial random graphs $G(n,p)$. In this paper, in contrast, we study the distance query complexity of relatively dense random graphs $G(n,p)$ assuming $p>n^{-1+\varepsilon}$ for some $\varepsilon>0$, i.e., when the random graph has bounded diameter whp. Although our main motivation lies in attempting to achieve tight bounds for the average-case complexity in a situation where the general setup is hard to analyse, additional interest in this problem is sparked by a surprising phenomenon, which will be discussed further.

One can expect that  dense graphs are typically harder to recover through distance queries, as the influence of a single edge to be recovered on distances between vertices might be less pronounced and thus harder to detect. In order to get some intuition of how the query complexity of the random graph evolves, let us first assume that $p$ is significantly above the threshold for the property of having diameter at most 2, namely, 
$$
p=\sqrt{(2\ln n+\ln\ln n+\omega(1))/n}.\footnote{Here and later $\omega(1)$ stands for a function tending to infinity with the underlying parameter $n$, perhaps arbitrarily slowly.}
$$
In this case, whp every pair of non-adjacent vertices has at least two common neighbours. Indeed, probability that a fixed pair of vertices has at most one common neighbour equals
$$
 \mathbb{P}(\mathrm{Bin}(n-2,p^2)\leq 1)=(1-p^2)^{n-2}+(n-2)p^2(1-p^2)^{n-3}=O\left(np^2e^{-np^2}\right)=o(n^{-2}).
$$
Therefore, by the union bound, every non-adjacent pair of vertices of $G(n,p)$ has at least two common neighbours whp. Then whp the query complexity equals ${n\choose 2}$. Indeed, for any set $\mathcal{Q}$ of pairs of vertices of size less than ${n\choose 2}$, changing any single adjacency relation of a pair from ${[n]\choose 2}\setminus\mathcal{Q}$ does not influence the distance between $x$ and $y$ for any $\{x,y\}\in\mathcal{Q}$. Therefore, any set $\mathcal{Q}$ that does not contain all pairs does not reconstruct $G(n,p)$ whp.

On the other hand, it seems likely that for $p$ close to the connectivity threshold $\ln n/n$, the query complexity might be much less --- quasilinear, as in the case of random regular graphs. Although in general the query complexity does not decrease as the graph becomes sparser since, in particular, as we mention above, for certain trees, the query complexity equals $\Omega(n^2)$, we show that, on average, as density decreases, the query complexity jumps down at specific moments around hitting times of diameter's increments. However, between these moments the query complexity grows quite rapidly. In particular, there exist $n^{-1+\varepsilon}\ll p_1\ll p_2\ll n^{-1/2}$ such that whp the diameter of $G(n,p_1)$ is larger than the diameter of $G(n,p_2)$ while the query complexity of the former is also larger. 

\paragraph{Related work.}

Kannan, Mathieu, and Zhou~\cite{KMZ,MZ13} presented a reconstruction algorithm that uses $\tilde O(n^{3/2})$ distance queries for bounded degree graphs. They also proved an information--theoretic lower bound $\Omega(n\log n/\log\log n)$ for trees with maximum degree 3 and asked whether  $\tilde O(n)$ is achievable for all bounded degree graphs. Tight results for certain families of bounded degree graphs --- trees, chordal graphs, and graphs with bounded treelength --- were recently obtained by Bastide and Groenland in~\cite{BG23}. In particular, they proved that for every $\Delta\geq 3$, any randomised algorithm requires $\Omega(n\log n)$ queries to reconstruct $n$-vertex trees of maximum degree $\Delta$ for a certain sequence of sizes of trees, which is tight since there exists a deterministic algorithm to reconstruct such trees using $O(n\log n)$ queries.  In~\cite{MZ21}, Mathieu and Zhou answered the question from~\cite{KMZ} for random $d$-regular graphs with constant $d$: whp $n\ln^2n$ distance queries is enough to reconstruct a uniformly random $d$-regular graph.

A modified version of the problem where the hidden graph is a tree and it is only allowed to query pairs of leaves (that are known) comes from biology: it serves as a model of reconstructing evolutionary (phylogenetic) trees and was introduced by Waterman, Smith, Singh, and Beyer~\cite{WSSB}. This problem is well studied: lower bounds on query complexity for deterministic and randomised algorithms were obtained in~\cite{KZZ03} and~\cite{BG23} respectively, and upper bounds were investigated in~\cite{BFPO,KLW,KZZ03,LOO}.

Other types of query oracles explored in the literature include {\it all-shortest-path}, {\it all-distances}, and {\it shortest-path} queries. The first two oracles answer all shortest paths~\cite{BEE06,SM10} and all distances~\cite{BEE06,EHM06} from a queried vertex respectively. The shortest-path oracle answers a shortest path between a queried pair of vertices~\cite{KMZ,RS}. Erlebach, Hall, and Mihal'\'{a}k~\cite{EHM07} studied reconstruction of $G(n,p)$ from all-shortest-path queries 
 for $p=\mathrm{const}$ and $p=n^{-1+\varepsilon}$ for specific $\varepsilon>0$.

Mathieu and Zhou~\cite{MZ21} also showed that their construction of the set of distance queries can be used to get an upper bound for the metric dimension of random $d$-regular graphs. Let us recall that the {\it metric dimension} $\beta(G)$ of a graph $G$ is the minimum cardinality of $S\subset V(G)$ such that all vertices of $G$ have different vectors of distances to the vertices from $S$. Such {\it resolving sets} $S$ or their variants were used, in particular, to label canonically vertices of random graphs~\cite{BES,Kucera}. The metric dimension of $G(n,p)$ was studied by Bollob\'{a}s, Mitsche, and Pra\l at in~\cite{BMP}. In particular, they proved that the metric dimension of $G(n,p)$ undergoes the following `zigzag' behaviour: if $p=n^{-\alpha}$, then $\log_n \beta(G(n,p))=1-(1-\alpha)\lfloor 1/(1-\alpha)\rfloor$ whp; see also~\cite{Odor} for related results. Although in our proof of the upper bound for the distance query complexity of $G(n,p)$ we use a resolving set whose size is far from being optimal, as we show below the `jumps' of the metric dimension and of the query complexity are synchronous --- both happen around $n^{-k/(k+1)}$, $k\in\mathbb{Z}_{>0}$.

\paragraph{Our contribution.}

As follows from the next result, the query complexity drops for the first time when $p$ passes a threshold $n^{-1/2+o(1)}$ on its way down. More generally, we prove that, for every integer constant $k\geq 1$, as soon as $p$ passes a threshold $n^{-(k+1)/(k+2)+o(1)}$, the query complexity drops by a factor of $\Theta(np)$.

\begin{theorem}
Let $k\geq 1$ be a fixed integer, $\varepsilon>0$,
$$
 n^{-1+\varepsilon}< p<n^{-k/(k+1)-\varepsilon},
$$
and let $G_n\sim G(n,p)$. Then there exist $C=C(k)>0$ and an adaptive algorithm $\mathtt{A}$ such that, for 
$q=\left\lfloor C/(n^{k-2}p^k)\right\rfloor,$ whp $G_n$ is $q$-reconstructible by $\mathtt{A}$.
\label{th1}
\end{theorem}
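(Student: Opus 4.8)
Since $p<n^{-k/(k+1)-\varepsilon}$, whp $d:=\operatorname{diam}(G_n)\ge k+2$, and as $np>n^{\varepsilon}>1$ the function $n^{4-d}p^{2-d}=n^{4}p^{2}(np)^{-d}$ is strictly decreasing in $d$, so it is enough to reconstruct $G_n$ whp using $O\!\left(n^{4-(k+2)}p^{2-(k+2)}\right)=O(n^{2-k}p^{-k})$ queries. I would use the following whp structural facts about $G_n$, all standard in this density range: uniform ball growth $|B_{j}(v)|=\Theta((np)^{j})$ for every vertex $v$ and every $j\le d-1$, where $B_{j}(v):=\{u:d_{G_n}(u,v)\le j\}$; uniform walk counts, namely the number of $x$–$y$ walks of length $m$ is $O\!\left((np)^{m}/n+\operatorname{polylog}n\right)$ for every pair and every $m\le 2k+1$; and $\Delta(G_n)\le 2np$ with bounded codegrees.

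The key tool is a \emph{richness of witnesses} lemma. Say $b$ is a \emph{witness} for $\{x,y\}$ if $|d_{G_n}(x,b)-d_{G_n}(y,b)|\ge 2$; the elementary point is that once the algorithm has queried both $\{x,b\}$ and $\{y,b\}$ for such a $b$, every graph consistent with the answers has $x\not\sim y$ — so $\{x,y\}$ is \emph{certified} to be a non-edge, regardless of the rest of the graph. I would prove: for every non-edge $\{x,y\}$, the witness set contains $B_{k}(x)\setminus L$ and $B_{k}(y)\setminus L$ for ``lens'' sets $L$ (depending on the pair) of size $O\!\left((np)^{2k+1}/n+\operatorname{polylog}n\right)$. (A non-witness at distance $j\le k$ from $y$ forces an $x$–$y$ walk of length $\le 2j+1$ through it, so the walk-count bound applies; and since $p<n^{-j/(j+1)-\varepsilon}$ for every $j\le k$ — which is exactly what $p<n^{-k/(k+1)-\varepsilon}$ together with $k\ge j$ buys — one has $(np)^{2k+1}/n\le n^{-\Omega(1)}(np)^{k}$ and $\operatorname{polylog}n\le n^{-\Omega(1)}(np)^{k}$.) In particular every non-edge has $(1-o(1))(np)^{k}=\Theta(n/s_{0})$ witnesses, where $s_{0}:=\lceil c\,n(np)^{-k}\rceil$; note $1\le s_{0}\le n$ for $n^{-1+\varepsilon}<p<n^{-k/(k+1)-\varepsilon}$.

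The algorithm keeps $M(v):=\{w:\{v,w\}\text{ not yet certified a non-edge}\}$ and proceeds in rounds $i=1,\dots,i^{\ast}$ with $i^{\ast}=\Theta(\log n)$: put $W_{i-1}:=\{v:|M(v)|>3np\}$ (the ``active'' vertices, $W_{0}=[n]$), sample a fresh uniform $B_{i}\in\binom{[n]}{s_{0}}$ and query all of $W_{i-1}\times B_{i}$; then, for every $v$ with $|M(v)|\le 3np$, query all of $\{v\}\times M(v)$ directly and retire $v$. (Should anything remain unresolved after $i^{\ast}$ rounds — an event of probability $o(1)$ — query all remaining pairs.) At the end every pair is certified a non-edge or was queried directly, so $G_n$ is determined. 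The direct sweeps cost at most $\sum_{v}3np=O(n^{2}p)=o(n^{2-k}p^{-k})$; the rounds cost $s_{0}\sum_{i}|W_{i-1}|$, so the whole estimate reduces to the claim that $\mathbb{E}|W_{i}|=O(n(1-\rho)^{i})$ for some fixed $\rho>0$ and all $i$ beyond a constant (which gives $s_{0}\sum_{i}|W_{i-1}|=O(s_{0}n)=O(n^{2-k}p^{-k})$), plus a concentration argument to pass from expectation to whp, plus the bound $\mathbb{P}(v\in W_{i^{\ast}})=o(1/n)$ ensuring whp nothing is left over.

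Here is the crux, and the step I expect to be genuinely delicate. If $\{v,w\}$ is an uncertified non-edge after round $i$, then $v$ and $w$ were active in every round $\le i$, hence every $\{v,b\},\{w,b\}$ with $b\in B_{1}\cup\dots\cup B_{i}$ was queried and none of them certified $\{v,w\}$; thus $B_{\le i}$ missed the witness set of $\{v,w\}$, and in particular $B_{\le i}\cap B_{k}(v)\subseteq L_{w}$. So $v\in W_{i}$ forces either $B_{\le i}\cap B_{k}(v)=\varnothing$ — probability $\le e^{-\Omega(i)}$ since the $B_{j}$ are independent of each other and of $G_n$ and $|B_{k}(v)|\asymp n/s_{0}$ — or $B_{\le i}\cap B_{k}(v)$, a set whose size is concentrated around $\Theta(i)$, is contained in $L_{w}$ for more than $np$ distinct $w$; but $|L_{w}|/|B_{k}(v)|\le n^{-\Omega(1)}$ uniformly, so the expected number of $w$ with $B_{\le i}\cap B_{k}(v)\subseteq L_{w}$ is at most $n\cdot n^{-\Omega(i)}<np$ once $i$ exceeds a constant. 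The subtlety — and why this is the hard part — is that which portion of $B_{k}(v)$ is ``lens'' depends on the partner $w$, so one cannot reduce ``$v$ active'' to ``$B_{\le i}$ missed a single fixed large subset of $B_{k}(v)$''; the hypergeometric-type argument that no single $w$'s lens can absorb the growing sample $B_{\le i}\cap B_{k}(v)$ is precisely where the polynomial smallness of the lens — hence the $-\varepsilon$ in the exponent — is used. A secondary difficulty is establishing the uniform ball-growth and walk-count estimates for \emph{every} pair and vertex, not merely typical ones, and checking that all $o(\cdot)$ losses stay below $n^{2-k}p^{-k}$ throughout the range — in particular near the sub-thresholds $n^{-k'/(k'+1)}$ with $k'>k$, where balls approach size $n$.
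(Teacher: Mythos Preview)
Your high-level scheme --- query the active vertices against a sequence of anchor sets, retire vertices adaptively, and exploit geometric decay of the active set --- is the same as the paper's. The crucial difference is the retirement criterion and what it buys for concentration. The paper retires $v$ as soon as it has $M$ (a large constant) anchors at distance $\le k$; this is a \emph{local} event depending only on $v$'s distance profile to the fixed anchor set, so the number of un-retired vertices after round $i$ concentrates and its geometric decay holds \emph{with high probability} via a bounded-differences inequality (Lemma~3.1, proved by McDiarmid over the edges between $X'$ and $[n]\setminus X'$). Reconstruction then rests on a forbidden-subgraph lemma (Lemma~3.2): whp no pair $v_1,v_2$ has $M$ anchors that are simultaneously within distance $k$ of $v_1$ and within $k+1$ of $v_2$ in $G_n\setminus\{v_1,v_2\}$, so among $v_1$'s $M$ close anchors one always certifies $\{v_1,v_2\}$ --- for edges as well as non-edges. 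No direct-query sweep is needed.

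Your criterion ``$|M(v)|\le 3np$'' is \emph{global}: whether $v$ retires depends on whether each non-edge $\{v,w\}$ has found a witness, which in turn depends on $w$ having been active long enough. Your expectation bound $\sum_i\mathbb{E}|W_i|=O(n)$ is fine, but the promised ``concentration argument to pass from expectation to whp'' is the genuine gap and you give no indication how to fill it. Markov only gives a constant failure probability, and bounded differences is obstructed because changing one $B_j$ can cascade through many retirements via the coupling of $M(v)$ and $M(w)$. The paper sidesteps this entirely by making retirement local, at the price of proving Lemma~3.2. A secondary issue: your lens bound $O\!\left((np)^{2k+1}/n+\operatorname{polylog}n\right)$ is not correct for non-edges at small distance --- if $d(x,y)=2$ via a common neighbour $z$, then most of $B_{k-1}(z)$ consists of non-witnesses (typically $d(x,b)=d(y,b)=d(z,b)+1$), so $|L|\gtrsim(np)^{k-1}$, which exceeds your bound whenever $p<n^{-(k+1)/(k+2)}$. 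The walk-count justification fails because walks that backtrack through $z$ are far more numerous than paths. This does not kill the strategy, since $(np)^{k-1}/(np)^k=1/(np)\le n^{-\varepsilon}$ is still polynomially small, but your stated bound and its proof are wrong as written.
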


We prove Theorem~\ref{th1} in Section~\ref{sc:proof1}.

\begin{theorem}
Let $k\geq 1$ be a fixed integer, $\varepsilon>0$,  and let
$$
p=\left(\frac{2\ln n+\omega(1)}{n^{k+1}}\right)^{1/(k+2)}.
$$
Then whp $G_n\sim G(n,p)$ has query complexity at least $\frac{1}{2(k+1+\varepsilon)n^{k-2}p^k}$.
\label{th2}
\end{theorem}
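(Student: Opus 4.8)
The plan is to prove the stronger statement that, whp, \emph{every} subset $\mathcal Q\subseteq\binom{[n]}{2}$ that reconstructs $G_n$ has size at least $\frac{1}{2(k+1+\varepsilon)n^{k-2}p^k}$; since any set of pairs is the query set of the (non-adaptive) algorithm that simply queries it, while any algorithm reconstructing $G$ with $q$ queries queries a reconstructing set of size at most $q$, the query complexity of $G$ equals the minimum size of a reconstructing subset of $\binom{[n]}{2}$, so adaptivity plays no role in the lower bound. The first step is a combinatorial observation. For a graph $G$ and a non-edge $e=\{u,v\}$, adding $e$ can only decrease distances, and by the standard identity $d_{G+e}(x,y)=\min\bigl(d_G(x,y),\,d_G(x,u)+1+d_G(v,y),\,d_G(x,v)+1+d_G(u,y)\bigr)$ the set $D_G(e):=\{\{x,y\}:d_{G+e}(x,y)\ne d_G(x,y)\}$ equals $\{\{x,y\}:\min(d_G(x,u)+d_G(v,y),\,d_G(x,v)+d_G(u,y))\le d_G(x,y)-2\}$; in particular $e\in D_G(e)$. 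If $\mathcal Q$ reconstructs $G$ then $\mathcal Q\cap D_G(e)\ne\emptyset$ for every non-edge $e$ --- otherwise $G$ and $G+e$ are two distinct graphs agreeing on all queried distances --- so, writing $S$ for the set of non-edges of $G$ and double counting the pairs $(e,f)$ with $e\in S$, $f\in\mathcal Q$ and $f\in D_G(e)$, one gets the hitting-set bound
\[
|\mathcal Q|\ \ge\ \frac{|S|}{\displaystyle\max_{\{a,b\}\in\binom{[n]}{2}}\bigl|\{e\in S:\{a,b\}\in D_G(e)\}\bigr|}.
\]

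The second step estimates both quantities for $G=G_n$. The numerator is immediate: whp $e(G_n)=(1+o(1))\binom n2 p=o(n^2)$, so $|S|=(1+o(1))\binom n2\ge\tfrac12(1-o(1))n^2$. For the denominator, fix a pair $\{a,b\}$ and put $l:=d_{G_n}(a,b)$. A non-edge $\{u,v\}$ satisfies $\{a,b\}\in D_{G_n}(\{u,v\})$ only if $d(a,u)+d(b,v)\le l-2$ for at least one of the two orderings of $\{u,v\}$; sending each such pair to an ordering witnessing this is an injection into the set of \emph{ordered} pairs $(x,y)$ with $d(a,x)+d(b,y)\le l-2$, so
\[
\bigl|\{e\in S:\{a,b\}\in D_{G_n}(e)\}\bigr|\ \le\ \sum_{i=0}^{l-2}\bigl|\{x:d(a,x)=i\}\bigr|\cdot\bigl|B_{l-2-i}(b)\bigr|\ \le\ \sum_{i=0}^{l-2}|B_i(a)|\,|B_{l-2-i}(b)|,
\]
where $B_j(w)$ denotes the ball of radius $j$ around $w$. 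Here the $+\omega(1)$ in the definition of $p$ is exactly what is needed: it places $p$ just above the threshold for diameter $k+2$, so whp $l\le k+2$ for every pair (this is the only use of the diameter), and hence only balls of radius at most $k$ are relevant.

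The third step is the routine fact that whp, uniformly over all $w\in[n]$ and all $j\le k$, $|B_j(w)|\le(1+o(1))(np)^j$. This follows from a layered breadth-first exploration: given the first $j$ layers, each vertex outside $B_j(w)$ joins layer $j+1$ independently with probability $1-(1-p)^{|L_j(w)|}\le(1+o(1))|L_j(w)|p$ (here $L_j(w)$ is layer $j$), the balls stay negligible since $(np)^k=n^{k/(k+2)+o(1)}=o(n)$, and the binomials involved have mean at least $np=n^{\Omega(1)}\gg\log n$, so Chernoff's bound and a union bound over $w$ and over $j\le k$ suffice. Substituting this back, every summand above is $(1+o(1))(np)^{l-2}$, there are $l-1\le k+1$ of them, and $(l-1)(np)^{l-2}$ is increasing in $l$, so whp $\max_{\{a,b\}}\bigl|\{e\in S:\{a,b\}\in D_{G_n}(e)\}\bigr|\le(k+1+o(1))(np)^k$. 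Feeding both estimates into the hitting-set bound yields, whp, for every reconstructing $\mathcal Q$,
\[
|\mathcal Q|\ \ge\ \frac{\tfrac12(1-o(1))n^2}{(k+1+o(1))(np)^k}\ =\ \frac{1-o(1)}{2(k+1+o(1))}\cdot\frac{1}{n^{k-2}p^k}\ \ge\ \frac{1}{2(k+1+\varepsilon)\,n^{k-2}p^k}
\]
for $n$ large (as $\tfrac{1-o(1)}{2(k+1+o(1))}\to\tfrac{1}{2(k+1)}>\tfrac{1}{2(k+1+\varepsilon)}$), which is the assertion.

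The main obstacle I anticipate is the uniform control of the small balls --- $|B_j(w)|\le(1+o(1))(np)^j$ for all $w$ at once, for each $j\le k$ --- because when $k\ge2$ one has $(np)^k\ge\sqrt n$, so the crudest moment estimates are insufficient and the layered exploration must be carried out with concentration at each step (and one must invoke the classical determination of $\mathrm{diam}(G(n,p))$ at this $p$). A second, quantitative, subtlety is the value of the constant: treating the two orderings in the description of $D_{G_n}(e)$ separately would lose a factor $2$ and give only $\frac{1}{4(k+1)}n^{2-k}p^{-k}$; bounding instead by the number of \emph{ordered} pairs $(x,y)$ with $d(a,x)+d(b,y)\le l-2$, as above, is precisely what produces the constant matching the $2(k+1+\varepsilon)$ in the statement.
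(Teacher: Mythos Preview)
Your proposal is correct and takes essentially the same approach as the paper: both argue that a reconstructing $\mathcal Q$ must meet $D_G(e)$ for every non-edge $e$ and then bound the number of non-edges a single query can detect by roughly $(k+1)(np)^k$ using the diameter bound together with ball/degree estimates. The paper packages this double count as a deterministic lemma in terms of $\mathrm{diam}(G)$ and $\Delta(G)$ (its Claim~\ref{cl:deterministic_lower_bound}, yielding $q(d-1+\varepsilon)\Delta^{d-2}\ge\binom{n}{2}-|E(G)|$) and then specialises to $G_n$, whereas you work directly with the hitting-set inequality and ball sizes in $G_n$; the underlying arithmetic and the resulting constant are the same.
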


Theorem~\ref{th2} is proven in Section~\ref{sc:proof2}. Let us emphasise that its proof is based on an explicit argument which is more efficient than the usual information-theoretic approach: we show a lower bound on the query complexity of a deterministic graph in terms of its diameter and maximum degree, see Claim~\ref{cl:deterministic_lower_bound}. Unlike the information-theoretic approach, our approach allows to bound the query complexity of any given graph. So, it actually demonstrates that, when the number of queries is less than the bound, even knowing the input graph $G$ does not help to reconstruct it --- no matter which pairs of vertices are queried, there is always a graph $G'\neq G$ with exactly the same answers.

Note that, for 
\begin{equation}
\left(\frac{2\ln n+\omega(1)}{n^{k+1}}\right)^{1/(k+2)}=p<n^{-k/(k+1)-\varepsilon},
\label{eq:p_tights_bounds_adaptive}
\end{equation}
the bounds in Theorem~\ref{th1} and Theorem~\ref{th2} differ by a constant factor, see Figure~\ref{fig}. So, roughly speaking (ignoring the constant factor) the query complexity in this interval increases with $p$.\\

 \begin{figure}[!ht] \centering
\includegraphics[width=420pt]{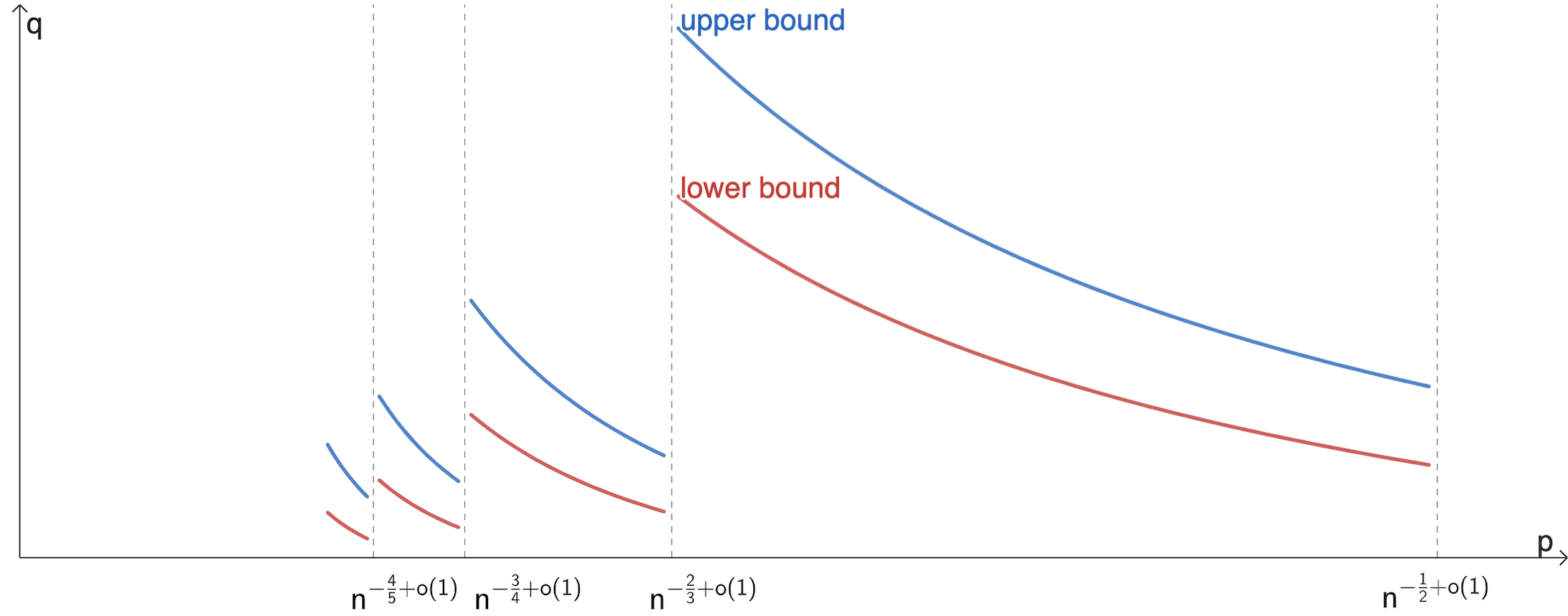}
\caption{\small Upper and lower bounds for query complexity of $G(n,p)$ for $p$ satisfying~\eqref{eq:p_tights_bounds_adaptive}.} \label{fig}
\end{figure}

We also prove tight (up to a constant factor) bounds for ``non-adaptive reconstructibility''. It turns that if the set of queried pairs $\mathcal{Q}$ is fixed, then both upper and lower bounds on the minimum $|\mathcal{Q}|$ such that $G(n,p)$ is $\mathcal{Q}$-reconstructable increase by a factor of $\log n$ when $(2\ln n+\omega(1))^{1/(k+2)} n^{-(k+1)/(k+2)}=p<n^{-k/(k+1)-\varepsilon}$.

\begin{theorem}
Let $k\geq 1$ be a fixed integer, $\varepsilon>0$, and let $p>n^{-1+\varepsilon}$. Then there exists a set $\mathcal{Q}=\mathcal{Q}(n)\subset{[n]\choose 2}$ of size at most $\frac{3}{n^{k-2}p^k}e^{n^kp^{k+1}}\ln n$ such that whp $G_n\sim G(n,p)$ is $\mathcal{Q}$-reconstructible.
\label{th1_2}
\end{theorem}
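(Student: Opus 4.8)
The plan is to produce $\mathcal{Q}$ by independent random sampling, to certify $\mathcal{Q}$-reconstructibility through a combinatorial \emph{witness} condition, and then to turn the random $\mathcal{Q}$ into a fixed one by averaging. If the claimed bound exceeds $\binom{n}{2}$ we simply take $\mathcal{Q}=\binom{[n]}{2}$, so we may assume $p$ lies in the range where the bound is non-trivial; there $np$ is at least a fixed positive power of $n$, and in the principal regime the diameter of $G_n$ is whp exactly $d=k+2$. The ranges of $p$ for which the diameter exceeds $k+2$ are governed by the same argument with $k$ replaced by $d-2$ and only make the required number of queries smaller, so I describe the case $d=k+2$.

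First I would record the whp structural properties of $G_n$ used below: its diameter equals $k+2$; for every fixed $r\le k+1$ and every vertex $v$, $|\{w:d_{G_n}(v,w)=r\}|=(1+o(1))(np)^{r}$; and a bounded-overlap statement for short paths joining a fixed pair of vertices, ruling out unexpected short cycles. From these one derives the key counting fact: for every pair $\{u,v\}$ the number of pairs $\{x,y\}$ with $d_{G_n}(x,u)+d_{G_n}(v,y)=k$ (or the symmetric condition) and $d_{G_n}(x,y)=k+2$ is $\Theta\bigl((np)^{k}e^{-n^{k}p^{k+1}}\bigr)$, simultaneously for all $\binom{n}{2}$ pairs. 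Call $\{x,y\}$ a \emph{witness} for $\{u,v\}$ if flipping the adjacency of $\{u,v\}$ in $G_n$ strictly changes $d_{G_n}(x,y)$; every pair counted above is a witness for $\{u,v\}$ (inserting $\{u,v\}$ shortens, deleting it lengthens the $x$--$y$ distance), and $\{u,v\}$ is trivially a witness for itself. The combinatorial heart is the criterion: \emph{if $\mathcal{Q}$ contains a witness for every pair $\{u,v\}$, then $\mathcal{Q}$ reconstructs $G_n$.} Containing a witness for each single flip is clearly necessary; its sufficiency is the delicate point, because a rival $G'\ne G_n$ agreeing with $G_n$ on all of $\mathcal{Q}$ might differ from $G_n$ in many edges at once, and then a witness for one flipped edge could a priori be neutralised by another flip. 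I would prove sufficiency by induction on $|E(G')\triangle E(G_n)|$, choosing an extremal flipped edge --- e.g.\ one lying on a geodesic of $G_n$ between the endpoints of some witness query --- and using the structural properties to show that this edge's witness query still separates $G_n$ from $G'$. I expect this step, together with pinning down the precise robustness needed in the definition of ``witness'', to be the main obstacle.

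Granting the criterion, let $\mathcal{Q}$ contain each pair of vertices independently with probability $\rho=C\,n^{-k}p^{-k}e^{n^{k}p^{k+1}}\ln n$ for a suitable constant $C=C(k)$. Then $\mathbb{E}|\mathcal{Q}|\le\tfrac12 n^{2}\rho=\tfrac{C}{2}\cdot\tfrac{1}{n^{k-2}p^{k}}e^{n^{k}p^{k+1}}\ln n$, so for an appropriate $C$ we get $|\mathcal{Q}|\le\tfrac{3}{n^{k-2}p^{k}}e^{n^{k}p^{k+1}}\ln n$ whp by a Chernoff bound. Conditionally on $G_n$ having the structural properties, each pair $\{u,v\}$ has at least $c\,(np)^{k}e^{-n^{k}p^{k+1}}$ witnesses, so the probability that $\mathcal{Q}$ avoids all of them is at most $(1-\rho)^{c(np)^{k}e^{-n^{k}p^{k+1}}}\le e^{-cC\ln n}$, which for an appropriate $C$ is $o(n^{-2})$; a union bound over the $\binom{n}{2}$ pairs shows that whp, jointly over $G_n$ and $\mathcal{Q}$, the set $\mathcal{Q}$ contains a witness for every pair, hence (by the criterion) reconstructs $G_n$. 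Thus $\mathbb{P}_{\mathcal{Q},G_n}(\mathcal{Q}\text{ reconstructs }G_n)=1-o(1)$ and $\mathbb{P}_{\mathcal{Q}}(|\mathcal{Q}|\le\tfrac{3}{n^{k-2}p^{k}}e^{n^{k}p^{k+1}}\ln n)=1-o(1)$, so averaging over the choice of $\mathcal{Q}$ produces a fixed set $\mathcal{Q}_{*}$ of the required size with $\mathbb{P}_{G_n}(\mathcal{Q}_{*}\text{ reconstructs }G_n)=1-o(1)$, which is the assertion.

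Two further points, which I regard as secondary, remain. The uniform lower bound $c\,(np)^{k}e^{-n^{k}p^{k+1}}$ on the witness count must hold for \emph{all} pairs at once and for every failure type (non-edges at any distance $\le k+2$, and edges, including pairs lying in short cycles); this follows from a first- or second-moment (or martingale) concentration estimate, since each such count has mean $\gg\ln n$ throughout the relevant range of $p$. And there are narrow boundary windows of $p$ near the thresholds where the diameter changes, in which only $\mathrm{polylog}(n)$ ``hardest'' pairs occur; these can simply be inserted into $\mathcal{Q}$ by hand without affecting the bound on $|\mathcal{Q}|$.
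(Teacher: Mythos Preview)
Your proposal has a genuine gap at precisely the point you flag as ``the main obstacle'': the sufficiency of the single-flip witness criterion. Having, for every pair $\{u,v\}$, a queried pair $\{x,y\}$ whose distance changes when $\{u,v\}$ alone is flipped does \emph{not} imply $\mathcal{Q}$-reconstructibility, and your induction on $|E(G')\triangle E(G_n)|$ does not go through as stated. Concretely, a rival $G'$ that inserts the non-edge $\{u,v\}$ and simultaneously deletes an edge on the $G_n$-geodesic from $x$ to $u$ (or from $v$ to $y$) may keep $d_{G'}(x,y)=k+2$; there is no evident ``extremal'' flipped edge whose witness is immune to such compensation, and the structural properties you list do not rule it out. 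Your witness count is for the weak single-query notion; any strengthening of ``witness'' sufficient to push the induction through would change the count and hence the value of $\rho$, and with i.i.d.\ sampling the natural robust certificates are too expensive (see below).

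The paper sidesteps this difficulty with a completely different, \emph{structured} query set: fix a vertex set $X$ of size $\lfloor 3e^{\lambda}\ln n/(n^{k-1}p^k)\rfloor$, where $\lambda=n^kp^{k+1}$, and take $\mathcal{Q}$ to be \emph{all} pairs with at least one endpoint in $X$. This guarantees that for every $v_1,v_2$ and every $x\in X$ both $\{v_1,x\}$ and $\{v_2,x\}$ are queried, and the implication ``$d(v_1,x)\le k$ and $d(v_2,x)\ge k+2$ $\Rightarrow$ $v_1\not\sim v_2$'' is the triangle inequality, valid in \emph{every} graph consistent with the answers --- not merely in single-flip perturbations of $G_n$. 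One then proves (Lemma~\ref{lm:upper_main}) that whp, for every $v_1,v_2$, some $x\in X$ satisfies $d_{G_n\setminus\{v_1,v_2\}}(v_1,x)\le k$ and $d_{G_n\setminus\{v_1,v_2\}}(v_2,x)\ge k+2$. Applied to non-edges this forces $E(G')\subseteq E(G_n)$ for any consistent $G'$; then, since distances are monotone under edge deletion, applying it to edges forces $E(G')=E(G_n)$. Your i.i.d.\ sampling cannot produce these two-query certificates at the required density --- the probability that both $\{v_1,x\}$ and $\{v_2,x\}$ are sampled is $\rho^2$, which forces $|\mathcal{Q}|$ well above the stated bound --- so the ``star'' structure of $\mathcal{Q}$ around $X$ is essential to the argument, not a convenience.
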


Theorem~\ref{th1_2} is proven in Section~\ref{sc:proof1_2}.

\begin{theorem}
Let $k\geq 1$ be a fixed integer, $\varepsilon>0$, and let
$$
\left(\frac{2\ln n+\omega(1)}{n^{k+1}}\right)^{1/(k+2)}=
p<\left(\left(\frac{1}{k+1}-\varepsilon\right)\frac{\ln n}{n^k}\right)^{1/(k+1)}.
$$
 Then there is no set $\mathcal{Q}=\mathcal{Q}(n)$ of size at most $\frac{1}{8(k+1)^2 n^{k-2}p^k}e^{n^kp^{k+1}}\ln n $ such that whp $G_n\sim G(n,p)$ is $\mathcal{Q}$-reconstructible.
\label{th3}
\end{theorem}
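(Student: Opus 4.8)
The plan is to lower-bound the non-adaptive query complexity by a union-bound/second-moment argument showing that for every fixed $\mathcal{Q}$ of the allowed size, whp there is a pair $\{x,y\}$ which is ``invisible'' to $\mathcal{Q}$ in the sense that flipping the adjacency of $\{x,y\}$ changes no queried distance, and moreover $\{x,y\}$ is such that $G_n$ and the flipped graph $G_n'$ are genuinely different (so $\mathcal{Q}$ fails to reconstruct). Concretely, call a pair $\{x,y\}\notin\mathcal{Q}$ \emph{bad for $\mathcal{Q}$} if, after toggling the edge $xy$, every queried distance $d_{G_n}(u,v)$, $\{u,v\}\in\mathcal{Q}$, is unchanged. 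Since at the given density range $G_n$ has diameter exactly $k+2$ whp (the upper endpoint of $p$ forces diameter $>k+1$ by the usual threshold for diameter $\le k+1$, cf.\ the exponent $n^{-k/(k+1)}$, and the lower endpoint $p\ge(2\ln n+\omega(1))^{1/(k+2)}n^{-(k+1)/(k+2)}$ forces diameter $\le k+2$), the edge $xy$ is toggle-invisible to $\mathcal{Q}$ essentially when no shortest path between a queried pair is ``pinned'' through $xy$: if $xy\in E(G_n)$, deleting it must not increase any queried distance, i.e.\ for each queried $\{u,v\}$ there is a shortest $u$–$v$ path avoiding $xy$; if $xy\notin E(G_n)$, adding it must not create a shorter $u$–$v$ path, i.e.\ $d_{G_n}(u,x)+1+d_{G_n}(y,v)\ge d_{G_n}(u,v)$ and symmetrically. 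I would set this up so that the quantity to control is: the expected number of non-edges $\{x,y\}$ with $d_{G_n}(x,y)=k+2$ whose addition does not shorten any queried distance. Such a pair $\{x,y\}$, once found, yields $G_n'\ne G_n$ with identical $\mathcal{Q}$-answers, hence non-reconstructibility.

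The heart of the estimate is this. Fix a pair $\{x,y\}$ at distance $k+2$; adding the edge $xy$ creates the new value $d'(u,v)=\min\{d(u,v),\,d(u,x)+1+d(y,v),\,d(u,y)+1+d(x,v)\}$. This is strictly smaller than $d(u,v)$ only if $u$ is within distance roughly $d(u,v)-1$ of one endpoint and $v$ correspondingly close to the other — a ``dangerous'' configuration for the queried pair $\{u,v\}$. The key point, parallel to Theorem~\ref{th2} and the explicit deterministic bound of Claim~\ref{cl:deterministic_lower_bound}, is that the number of pairs $\{u,v\}$ that are dangerous for a given $\{x,y\}$ is typically only about $e^{n^kp^{k+1}}$ (the expected size of a ball of radius roughly $k/2$, or more precisely the factor governing how many vertices lie at the critical distance from $x$ and from $y$), so a single query $\{u,v\}$ "kills" only a $\Theta\!\bigl(e^{n^kp^{k+1}}/\binom{n}{2}\bigr)$ fraction of candidate pairs $\{x,y\}$. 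Since the number of pairs at distance exactly $k+2$ is $\Theta(n^2)$ whp (again from the diameter/second-moment calculation at this density), a set $\mathcal{Q}$ of size $o\!\bigl(n^2 e^{-n^kp^{k+1}}\bigr)=o\!\bigl(\tfrac{1}{n^{k-2}p^k}e^{n^kp^{k+1}}\bigr)\cdot$const (matching the claimed bound $\tfrac{1}{8(k+1)^2 n^{k-2}p^k}e^{n^kp^{k+1}}\ln n$ up to the $\ln n$ and the explicit constant) cannot cover all of them, so a bad pair survives whp. I would carry this out by: (i) establishing diameter $=k+2$ and counting distance-$(k+2)$ pairs whp; (ii) proving the deterministic/typical bound that each queried pair is dangerous for at most $C\,e^{n^kp^{k+1}}$ pairs $\{x,y\}$ — this is where the growth rate $e^{n^kp^{k+1}}$ and the relation $\tfrac{1}{n^{k-2}p^k}e^{n^kp^{k+1}}=\tfrac{n^2}{(n^kp^{k+1})}e^{n^kp^{k+1}}\approx$ (number of distance-$(k+2)$ pairs)$/$(ball size) enters, via concentration of ball sizes in $G(n,p)$; (iii) a union bound over the (exponentially many, but harmlessly so after taking logarithms) choices of surviving structure, or rather a direct counting argument that survives because the "fraction killed per query" bound holds simultaneously for the realized graph.

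The main obstacle is step (ii): making the "dangerous-pair" count work \emph{for the realized random graph $G_n$ simultaneously over all $\{x,y\}$ and all potential queries}, not just in expectation. One needs a uniform control on the sizes of distance balls $B_r(v)$ for $r\le k+1$ and of the "sphere" sets $S_r(v)$, with enough concentration (e.g.\ via a two-round exposure or a martingale/Azuma argument on BFS layers, as is standard for $G(n,p)$ above the diameter-$d$ threshold) that the number of pairs $\{u,v\}$ for which adding $xy$ helps is $(1+o(1))$ times its expectation uniformly. A secondary subtlety is handling the boundary case $d(u,v)=k+2$ (a queried pair at the diameter): adding $xy$ could drop it to $k+1$, and one must check these dangerous configurations are still rare — they require $u,v$ to sit in the small "near-antipodal through $x,y$" region, which the same ball-size estimates control. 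Once the uniform ball-concentration lemma is in hand, the rest is the counting/union bound described above, and the constant $8(k+1)^2$ is exactly what is needed to absorb the $(1+o(1))$ slack and the difference between "expected number of dangerous pairs" and "number of distance-$(k+2)$ pairs" bounds, with the extra $\ln n$ factor arising (as in Theorem~\ref{th1_2}) from the need for the bad pair to exist with probability $1-o(1)$ rather than merely positive probability.
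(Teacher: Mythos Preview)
Your overall strategy --- exhibit a non-edge $\{x,y\}$ whose insertion leaves every queried distance unchanged --- is the right one and matches the paper's notion of an undetectable pair. But the counting/union-bound mechanism you propose cannot deliver the $\ln n$ factor, and several of your formulas are off.

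First, the arithmetic. A query $\{u,v\}$ at distance $k+2$ ``kills'' approximately $(k+1)(np)^k$ candidate pairs $\{x,y\}$ (those with $x\in N_{k'}(u)$, $y\in N_{k-k'}(v)$ for some $k'\le k$), not $e^{\lambda}$ where $\lambda=n^kp^{k+1}$; and the asserted identity $n^2e^{-\lambda}=\Theta\bigl(e^{\lambda}/(n^{k-2}p^k)\bigr)$ is false --- it would force $(np)^k=e^{2\lambda}$. Also, the number of pairs at distance exactly $k+2$ is $\Theta(n^2e^{-\lambda})$, not $\Theta(n^2)$, and near the upper end of the $p$-range this is $n^{2-1/(k+1)+o(1)}$.

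More fundamentally: even with the correct counts and the refinement that only an $e^{-\lambda}$-fraction of queries are at distance $k+2$ (the only ones that kill via the length-$(k+1)$ mechanism), the total number of killed candidates is about $|\mathcal{Q}|\,e^{-\lambda}(k+1)(np)^k$, and requiring this to be below $\binom{n}{2}$ yields only $|\mathcal{Q}|\lesssim e^{\lambda}/((k+1)n^{k-2}p^k)$ --- an $e^\lambda$ improvement over Theorem~\ref{th2}, but with \emph{no} $\ln n$. At the target $|\mathcal{Q}|=\Theta\bigl(e^{\lambda}\ln n/(n^{k-2}p^k)\bigr)$ the average number of dangerous queries per candidate is $\Theta(\ln n)$, so a pigeonhole or first-moment count simply cannot produce a surviving candidate; your closing sentence about the $\ln n$ ``arising from the need for whp'' names the requirement but not a mechanism.

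The paper gets the $\ln n$ by a different idea. For a fixed candidate $\{u_1,u_2\}$, a query $\{v_1,v_2\}$ with $v_1\in N_{k'}(u_1)$, $v_2\in N_{k-k'}(u_2)$ is harmless whenever $d(v_1,v_2)\le k+1$, and this holds with conditional probability $1-e^{-\lambda+o(1)}$ using edges between $N_k(u_1)\setminus N_k(u_2)$ and $N_k(u_2)\setminus N_k(u_1)$ that have not yet been exposed. Hence even with $M=\Theta(e^{\lambda}\ln n)$ such queries, the candidate survives with probability $(1-e^{-\lambda})^M=n^{-c}$ for some $c<1/(k+1)$. The paper then runs an \emph{iterative exposure procedure}: repeatedly sample a candidate pair from vertices not yet touched, expose its two $k$-balls, and test survival on the fresh edges; after $\tau=n^{1/(k+1)-o(1)}$ essentially independent trials, $\tau\cdot n^{-c}\to\infty$ yields a survivor whp. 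This conditional-independence / multi-round exposure step is the missing ingredient in your plan; neither a union bound nor an unstructured second-moment estimate on the number of survivors substitutes for it without a separate and nontrivial analysis of the correlations.
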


Theorem~\ref{th3} is proven in Section~\ref{sc:proof3}. Note that there is still an almost polynomial gap between lower and upper bounds even for the ``non-adaptive reconstructibility'' in the narrow ``transition range'', i.e., when
\begin{equation}
(1-o(1))\left(\frac{\ln n}{(k+1)n^k}\right)^{1/(k+1)}<p<\left(\frac{2\ln n+O(1)}{n^k}\right)^{1/(k+1)}.
\label{eq:p_NO_tights_bounds}
\end{equation}

\paragraph{Proofs outline.} For simplicity of presentation, let us assume that $p=n^{-\alpha}$, where $\frac{k}{k+1}<\alpha<\frac{k+1}{k+2}$ for some $k\in\mathbb{Z}_{>0}$.

The crucial observation that allows to prove tight (up to a constant) upper bounds both for adaptive and non-adaptive query complexities is the following: if $\{v_1,x\}$ and $\{v_2,x\}$ are queried and $d(v_2,x)\geq d(v_1,x)+2$, then $v_1$ and $v_2$ are not adjacent. 

Although whp the diameter of $G_n\sim G(n,p)$ equals $k+2$, for a sufficiently large fixed set $X\subset[n]$, every vertex $v\in[n]$ has a vertex from $X$ in its $k$-neighbourhood. Indeed, if $X$ has size $\Theta\left(\frac{\ln n}{n^{k-1}p^k}\right)$, then the expected number of $k$-paths from a fixed vertex $v\in[n]$ to $X$ is $\Theta(\ln n)$. Thus, using standard concentration inequalities, it is possible to overcome the union bound over the choice of $v\in[n]$ for $|X|=C\frac{\ln n}{n^{k-1}p^k}$ and $C$ sufficiently large. Actually, the following stronger property can be proved: whp for every $v_1,v_2\notin X$, there exists $x\in X$ such that $d_{G_n}(v_1,x)\leq k$ while $d_{G_n\setminus\{v_1,v_2\}}(v_2,x)\geq k+2$, where $G_n\setminus\{v_1,v_2\}$ is a spanning subgraph of $G_n$ that is obtained by excluding the edge $\{v_1,v_2\}$. Thus, as soon as all pairs of vertices with at least one vertex in $X$ are queried, all the remaining adjacencies are reconstructible whp. Indeed, for non-adjacent $v_1,v_2$, there exists $x$ such that $d(v_1,x)\leq k$ and $d(v_2,x)\geq k+2$, thus we may reconstruct all non-adjacencies. All the other pairs of vertices must be adjacent since otherwise if we could exclude, say, the edge $\{v_1,v_2\}$, then there is an $x\in X$ such that $d(v_2,x)\leq k+1$ while, in the corrupted graph, the distance between $v_2$ and $x$ would be at least $k+2$. This proves Theorem~\ref{th1_2}, see details in Section~\ref{sc:proof1_2}.

The proof of Theorem~\ref{th1} in Section~\ref{sc:proof1} is based on similar ideas but also utilises the possibility of updating queries adaptively: indeed, instead of querying all pairs $\{x\in X, v\in[n]\}$ simultaneously, we may split the set $X$ into $\nu=\Theta(\log n)$ equal parts $X_1,\ldots,X_{\nu}$ and then, for every $i=1,\ldots,\nu$, query a pair $\{x\in X_i,v\in[n]\}$ only if $v$ has not yet been excluded from the consideration. A vertex $v$ is excluded from the consideration at time $i$, if it has sufficiently many (at least $M$ for some large enough constant $M$) vertices in the intersection of its $k$-neighbourhood with $X_1\cup\ldots\cup X_i$. This allows to query the desired number of pairs whp. In order to complete the proof, it remains to observe that whp there are no $v_1,v_2$ that have $M$ different vertices $x\in X$ so that $d_{G_n\setminus\{v_1,v_2\}}(v_1,x)\leq k$ and $d_{G_n\setminus\{v_1,v_2\}}(v_2,x)\leq k+1$. This would imply the existence of a good $x\in X$ as in the proof of Theorem~\ref{th1_2} for any pair $\{v_1,v_2\}$.

Proofs of both lower bounds in Theorem~\ref{th2} and Theorem~\ref{th3} use the notion of an {\it undetectable pair} in a graph $G$ having diameter $d\geq 3$ with a set of queried pairs of vertices $\mathcal{Q}\subset{V(G)\choose 2}$: a pair of vertices $\{u_1,u_2\}$ is undetectable if $\{u_1,u_2\}\notin\mathcal{Q}\cup E(G)$ and there is no ``path'' $u_1w_1\ldots w_{\ell}u_2$ in $G$, where $\ell\leq d-2$, such that $\ell$ of its ``edges'' are actual edges of $G$, and the one remaining pair of consecutive vertices belongs to $\mathcal{Q}$. It is easy to see that if $G$ has at least one undetectable pair, then it is not reconstructible from $\mathcal{Q}$ since the addition of the edge $\{u_1,u_2\}$ does not change answers to queries. When $\mathcal{Q}$ is small, then a simple counting proof shows that there are undetectable pairs in $G_n$ whp.  This proves Theorem~\ref{th2}, see details in Section~\ref{sc:proof2}.

In order to improve the lower bound by a factor of $\log n$ for the ``non-adaptive reconstructibility'' (Theorem~\ref{th3}), we first refine the notion of undetectability, and then introduce a randomised algorithm that finds an undetectable pair (in the refined sense) in $G_n$ whp. Let us call a pair of vertices $\{u_1,u_2\}$  $\ell$-undetectable in $G$, if $\{u_1,u_2\}\notin\mathcal{Q}\cup E(G)$ and there is no ``path'' $u_1w_1\ldots w_{\ell}u_2$ in $G$ such that $\ell$ of its ``edges'' are actual edges of $G$, the one remaining pair of consecutive vertices belongs to $\mathcal{Q}$, and the distance between the two vertices in this pair is at least $\ell+2$. Though the refined notion is weaker, it still allows to prove non-reconstructibility: if $G$ has diameter $d$ and has a pair which is $\ell$-undetectable for all $\ell\leq d-2$, then $G$ is not $\mathcal{Q}$-reconstructible. Notice that, to decide whether a given pair $\{u_1,u_2\}$ is undetectable, it is sufficient to know only the edges induced by the union of $k$-neighbourhoods of $u_1$ and $u_2$. In Section~\ref{sc:proof3}, we show that the following algorithm finds an undetectable pair whp: at every step, choose a pair of vertices $\{u_1,u_2\}$ uniformly at random from the set of {\it unconsidered} vertices, consider all vertices in $k$-neighbourhoods of $u_1,u_2$, and check whether $\{u_1,u_2\}$ is $\ell$-undetectable for all $\ell\leq k$.

\paragraph{Structure of the paper.} The rest of the paper is organised as follows. In Section~\ref{sc:rg_pre} we recall properties of random graphs that we will use in our proofs: namely, asymptotic behaviour of sizes of balls of small radii and the diameter. Proofs of upper bounds --- Theorem~\ref{th1} and Theorem~\ref{th1_2} --- appear in Sections~\ref{sc:proof1}~and~\ref{sc:proof1_2} respectively. In Sections~\ref{sc:proof2}~and~\ref{sc:proof3} we prove the lower bounds, Theorems~\ref{th2}~and~\ref{th3} respectively. Finally, in Section~\ref{sc:further} we raise several natural questions that we leave unanswered.

\section{Properties of random graphs}
\label{sc:rg_pre}

In this section, we describe certain properties of $G_n\sim G(n,p)$ that we will use in the proofs. We also refer the reader to monographs~\cite{Bollobas,FriezeKaronski,Janson} for a comprehensive exposition of properties of random graphs and of probabilistic tools --- especially, to \cite[Chapter 2]{Janson} containing well-known standard concentration inequalities (in particular, Chernoff bounds for binomial distribution~\cite[Theorem 2.1]{Janson}, Hoeffding's exponential tail bound for the hypergeometric distribution~\cite[Theorem 2.10]{Janson}, and Harris lemma~\cite[Theorem 2.12]{Janson}) that we will use in the proofs and will not state them explicitly.

Let us state a technical claim about the sizes of $k$-neighbourhoods in $G_n$ that we will use very frequently both for lower and upper bounds. For $v\in[n]$ and $r\in\mathbb{Z}_{\geq 0}$, everywhere in the paper we denote by $N_r(v)$ the (random) $r$-ball around $v$ in $G_n$. We will also denote $N_r(U):=\cup_{v\in U}N_r(v)$ for $U\subset[n]$.

\begin{claim}
Let $k$ be a positive integer and let  $\log^2 n/n\ll p\ll (n^{k-1}\log n)^{-1/k}$. Fix $v\in[n]$. With probability $1-o(n^{-2})$, in $G(n,p)$,
$$
|N_r(v)|=(np)^r(1+o(1/\log n))\quad\text{ for all }r\in[k].
$$
\label{cl:balls}
\end{claim}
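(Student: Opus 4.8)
The plan is to analyse the breadth‑first exploration of $G_n$ from $v$. For $r\ge 0$ let $L_r=L_r(v)$ be the set of vertices at distance exactly $r$ from $v$, so that $N_r(v)=\bigcup_{j=0}^{r}L_j$ and $|N_r(v)|=\sum_{j\le r}|L_j|$. Since $np\to\infty$ in our range of $p$, the sphere sizes will dominate: $|N_r(v)|=|L_r|+|N_{r-1}(v)|$ and $|N_{r-1}(v)|\approx(np)^{r-1}$ is smaller than $(np)^r$ by a factor $\Theta(1/(np))=o(1/\log n)$. Hence it suffices to prove, by induction on $r\in\{0,1,\dots,k\}$, that with probability $1-o(n^{-3})$ one has $|L_r|=(np)^r\bigl(1+o(1/\log n)\bigr)$ (the case $r=0$ being trivial, $|L_0|=1$), and then take a union bound over the $k$ steps and reassemble $|N_r(v)|$ from the $|L_j|$.

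For the inductive step, condition on the vertex sets $L_0,\dots,L_{r-1}$. Given this, the edges between $L_{r-1}$ and $[n]\setminus N_{r-1}(v)$ are still unexposed and i.i.d.\ $\mathrm{Bern}(p)$, so each $u\notin N_{r-1}(v)$ joins $L_r$ independently with probability $1-(1-p)^{|L_{r-1}|}$; thus conditionally $|L_r|\sim\mathrm{Bin}\!\bigl(n-|N_{r-1}(v)|,\,1-(1-p)^{|L_{r-1}|}\bigr)$. Two routine estimates, both coming straight from the assumed range of $p$, pin down this binomial on the inductive event. First, the upper bound $p\ll(n^{k-1}\log n)^{-1/k}$ gives $(np)^{k}\ll n/\log n$, so $|N_{r-1}(v)|=(np)^{r-1}(1+o(1))=o(n/\log n)$ and therefore $n-|N_{r-1}(v)|=n\bigl(1-o(1/\log n)\bigr)$. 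Second, $|L_{r-1}|\,p=(np)^{r-1}p\,(1+o(1))=\tfrac{(np)^{r}}{n}(1+o(1))\le\tfrac{(np)^{k}}{n}(1+o(1))=o(1/\log n)$, which with the elementary expansion $1-(1-p)^m=mp\bigl(1+O(mp)\bigr)$ (valid for $mp<1$) yields $1-(1-p)^{|L_{r-1}|}=|L_{r-1}|\,p\,(1+o(1/\log n))$. Multiplying the two factors, the conditional mean of $|L_r|$ equals $n\bigl(1-o(1/\log n)\bigr)\cdot(np)^{r-1}(1+o(1/\log n))\cdot p\,(1+o(1/\log n))=(np)^r(1+o(1/\log n))$; since $k$ is a fixed constant, the at most $k$-fold accumulation of such $(1+o(1/\log n))$ factors is again $1+o(1/\log n)$.

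Finally, apply a Chernoff bound to the conditional binomial: its mean has order $(np)^r\ge np$, which is much larger than $\log^2 n$ in our regime (indeed polynomial in $n$ in every application of the claim in this paper), so the probability that $|L_r|$ deviates from its conditional mean by more than a $\log^{-3/2}n$ fraction is $o(n^{-3})$; combined with the previous paragraph this gives $|L_r|=(np)^r(1+o(1/\log n))$ with the required probability, the base case $r=1$ being just $|L_1|=|N_1(v)|-1\sim\mathrm{Bin}(n-1,p)$ treated by the same estimate. Then $|N_r(v)|=|L_r|+|N_{r-1}(v)|=(np)^r(1+o(1/\log n))$ because $(np)^{r-1}/(np)^r=1/(np)=o(1/\log n)$, and a union bound over $r\in[k]$ finishes the proof. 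The only genuinely delicate point is the bookkeeping of the $o(1/\log n)$ relative errors — one must choose the Chernoff deviation parameter small enough to be absorbed into $o(1/\log n)$ yet large enough that the tail probability beats $o(n^{-2})$ — which is precisely the place where the assumed lower bound on the density $p$ (hence on $np$) is used.
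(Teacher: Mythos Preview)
Your proposal is correct and follows essentially the same approach as the paper's own proof: both run a breadth-first exploration from $v$, induct on $r$, observe that conditionally $|L_r|$ (the paper's $N'_{r}$) is binomial with mean $(np)^r(1+o(1/\log n))$, and apply a Chernoff bound with relative deviation $\log^{-3/2}n$; the paper's write-up is slightly terser but the ingredients and the error bookkeeping are identical.
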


\begin{proof}
From the Chernoff bound, it follows that $|N_1(v)|=np(1+o(1/\log n))$ with probability $1-o(n^{-2})$. Let us prove by induction over $r$ that, for every, $r\leq k$, $|N_r(v)|=(np)^i(1+o(1/\log n))$ with probability $1-o(n^{-2})$. Assume that for an $r\in[k-1]$, this probability bound is already obtained. Let us expose the $r$-ball around $v$ and denote the set of vertices at distance exactly $r$ by $N'_r$. Due to the induction assumption, we may suppose that $|N'_r|=(np)^r(1+o(1/\log n))=o(n/\log n)$. Thus, the number of vertices in $[n]\setminus N_r(v)$ having neighbours in $N'_r$ (denoted by $|N'_{r+1}|$) is distributed as $\mathrm{Bin}(n(1-o(1/\log n)),1-(1-p)^{|N'_r|})$. Clearly, $N_{r+1}(v)=N_r(v)\sqcup N'_{r+1}$. Since $\mathbb{E}|N'_{r+1}|=n^{r+1}p^{r+1}(1+o(1/\log n))\gg\log^4 n$, we get that
$$
\mathbb{P}\left(\left||N'_{r+1}|
-\mathbb{E}|N'_{r+1}|\right|>\mathbb{E}|N'_{r+1}|/(\ln n)^{3/2}\right)=\exp\left[-\Omega(\mathbb{E}|N'_{r+1}|/\ln^3 n)\right]=o(n^{-2})
$$
by the Chernoff bound, as needed.
\end{proof}

Let us recall the result of Bollob\'{a}s~\cite{Bol_diameter} about the asymptotic distribution of the diameter of $G_n$, see also~\cite[Chapter 10.2]{Bollobas}.

\begin{theorem}
Let $c\in\mathbb{R}$ be a constant and $p=\left(\frac{2\log n+c}{n^{k}}\right)^{1/(k+1)}$. Then the diameter of $G_n$ converges in distribution to a random variable taking values $k+1$ and $k+2$ with probabilities $e^{-e^{-c}/2}$ and $1-e^{-e^{-c}/2}$ respectively.
\end{theorem}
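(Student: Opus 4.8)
The plan is to establish three facts and combine them. (a) Whp $\mathrm{diam}(G_n)\ge k+1$. (b) Whp $\mathrm{diam}(G_n)\le k+2$. (c) The number $Y=Y_n$ of pairs $\{u,v\}$ with $d_{G_n}(u,v)\ge k+2$ converges in distribution to a Poisson random variable of mean $\lambda:=e^{-c}/2$. Granting these, the theorem follows: since $\{Y=0\}=\{\mathrm{diam}\le k+1\}$, we have $\mathbb{P}(\mathrm{diam}=k+1)=\mathbb{P}(Y=0)-\mathbb{P}(\mathrm{diam}\le k)\to e^{-\lambda}-0$ by (a) and (c); $\mathbb{P}(\mathrm{diam}=k+2)=\mathbb{P}(\mathrm{diam}\le k+2)-\mathbb{P}(Y=0)\to 1-e^{-\lambda}$ by (b) and (c); and $\mathbb{P}(\mathrm{diam}\ge k+3)\to 0$ by (b). Since $e^{-\lambda}=e^{-e^{-c}/2}$, this is exactly the asserted limiting law.

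Fact (a) is a first-moment estimate: the expected number of pairs at distance $\le k$ is at most $\binom n2\sum_{j=1}^{k}n^{j-1}p^{j}=O(n^{k+1}p^{k})=O(n^{2-1/(k+1)}\log n)=o(n^2)$, so whp not all pairs lie within distance $k$; as $p\gg\log n/n$, whp $G_n$ is connected, and hence whp $\mathrm{diam}\ge k+1$. Fact (b) is proved by breadth-first search. Our $p$ satisfies the hypotheses of Claim~\ref{cl:balls} for this very $k$ (the upper bound holds because $\tfrac{k}{k+1}>\tfrac{k-1}{k}$), so whp $|N_k(v)|=(1+o(1/\log n))(np)^k$ for all $v$, whence the sphere $S_k(v):=N_k(v)\setminus N_{k-1}(v)$ has size at least $\tfrac12(np)^k$. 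A vertex outside $N_k(v)$ has no neighbour in $S_k(v)$ with probability at most $(1-p)^{(np)^k/2}\le e^{-(np)^kp/2}=e^{-(2\log n+c)/2}=e^{-c/2}/n$, so the expected number of vertices not in $N_{k+1}(v)$ is at most $e^{-c/2}$ and whp $|N_{k+1}(v)|\ge n-\log n$ for all $v$; then $|S_{k+1}(v)|=(1-o(1))n$, and a vertex outside $N_{k+1}(v)$ has no neighbour in $S_{k+1}(v)$ with probability at most $(1-p)^{(1-o(1))n}=e^{-\Omega(np)}=n^{-\omega(1)}$ (as $np=n^{\Omega(1)}$), using edges disjoint from those already exposed. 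A union bound over the $O(n^2)$ pairs gives $\mathrm{diam}\le k+2$ whp.

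Fact (c) is the heart of the matter; it is proved by the method of factorial moments, so it suffices to show $\mathbb{E}[(Y)_r]\to\lambda^r$ for every fixed $r\ge1$, where $(Y)_r=Y(Y-1)\cdots(Y-r+1)$. Set $a=\lceil(k+1)/2\rceil$ and $b=\lfloor(k+1)/2\rfloor$, so $a+b=k+1$. The key elementary fact is that $d_{G_n}(u,v)\le k+1$ iff $N_b(u)\cap N_a(v)\ne\emptyset$ (take the vertex at distance $\min(b,d(u,v))$ from $u$ on a shortest $u$--$v$ path). Hence $\mathbb{E}[Y]=\binom n2\,\mathbb{P}(N_b(u)\cap N_a(v)=\emptyset)$ for a fixed pair. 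Exposing the graph by a two-source BFS from $u$ and $v$ and invoking Claim~\ref{cl:balls} to control the slightly fluctuating sizes $|N_b(u)|=(1+o(1/\log n))(np)^b$ and $|N_a(v)|=(1+o(1/\log n))(np)^a$, one reduces to the probability that two essentially independent random subsets of $[n]$ of these sizes are disjoint; since $(np)^b(np)^a/n=(np)^{k+1}/n=2\log n+c+o(1)$, this equals $(1+o(1))e^{-(2\log n+c)}=(1+o(1))e^{-c}/n^2$, so $\mathbb{E}[Y]\to e^{-c}/2=\lambda$. For $r\ge2$, $\mathbb{E}[(Y)_r]=\sum^{*}\mathbb{P}\big(\bigcap_{i=1}^{r}\{d_{G_n}(u_i,v_i)\ge k+2\}\big)$, the sum over ordered $r$-tuples of distinct pairs; the $O(n^{2r-1})$ tuples in which two pairs share a vertex contribute $o(1)$ (each such probability is $O(n^{-2r})$ by a similar estimate), while the $(1-o(1))\binom n2^{r}$ tuples on $2r$ distinct vertices behave as if independent and contribute $(1+o(1))\binom n2^{r}(e^{-c}/n^2)^r=(1+o(1))\lambda^r$; altogether $\mathbb{E}[(Y)_r]=\lambda^r+o(1)$.

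The main obstacle is making the reductions in Fact (c) rigorous — in particular, justifying the asymptotic independence both in the first-moment estimate and across different pairs. Since the relevant neighbourhoods of distinct pairs typically overlap (one already has $(np)^{k+1}/n\to\infty$), one cannot simply multiply probabilities but must expose the neighbourhoods pair by pair via a careful BFS scheme, tracking at each stage which potential cross-edges remain unexposed and checking that only a factor $1+o(1)$ is lost. This bookkeeping, together with the control of the lower-order corrections — short $u$--$v$ paths, and ball sizes to within a factor $1+o(1/\log n)$ as supplied by Claim~\ref{cl:balls} — needed to pin down the exact constant $e^{-c}$, is precisely what Bollob\'as's original argument accomplishes, and we refer to it for the details.
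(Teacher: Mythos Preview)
The paper does not prove this statement: it is quoted verbatim as a classical result of Bollob\'{a}s~\cite{Bol_diameter} (see also~\cite[Chapter~10.2]{Bollobas}) and used only through its corollary on the diameter threshold. Your sketch follows exactly the standard route of that original proof---first-moment bound for (a), BFS with Claim~\ref{cl:balls} for (b), and factorial-moment Poisson convergence for the count of far pairs in (c)---and you correctly isolate the delicate step (asymptotic independence across pairs despite overlapping neighbourhoods) and defer to Bollob\'{a}s for it, which is precisely what the paper itself does.
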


Since the property of having diameter at most $d$ is monotone, we immediately get the following corollary.

\begin{corollary}
If $p=\left(\frac{2\log n+\omega(1)}{n^k}\right)^{1/(k+1)}$, then whp the diameter of $G_n$ is at most $k+1$. If $p=\left(\frac{2\log n-\omega(1)}{n^k}\right)^{1/(k+1)}$, then whp the diameter of $G_n$ is at least $k+2$.
\label{cor:diameter}
\end{corollary}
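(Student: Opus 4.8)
The plan is to read the corollary off the theorem of Bollob\'{a}s quoted above via the standard monotone coupling of the family $\{G(n,p)\}_{p\in[0,1]}$: put independent uniform $[0,1]$ weights on the edges of $K_n$ and let $G(n,p)$ consist of the edges of weight at most $p$, so that $p'\le p''$ forces $G(n,p')\subseteq G(n,p'')$. Since adding edges cannot increase a distance, for every fixed integer $d$ the event $\{\operatorname{diam}(G_n)\le d\}$ is increasing and $\{\operatorname{diam}(G_n)\ge d\}$ is decreasing, with the convention that a disconnected graph has infinite diameter, so that the latter event is well defined in every case. Hence it suffices to sandwich the given $p$ between the exact ``window'' values $p_c:=\bigl(\tfrac{2\log n+c}{n^{k}}\bigr)^{1/(k+1)}$, $c\in\mathbb{R}$, for which the theorem supplies the precise limiting law of $\operatorname{diam}(G_n)$.

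First I would prove the upper bound. Write the hypothesis as $p=\bigl(\tfrac{2\log n+f(n)}{n^{k}}\bigr)^{1/(k+1)}$ with $f(n)\to\infty$ and fix an arbitrary constant $c$. For all sufficiently large $n$ one has $f(n)\ge c$, hence $p\ge p_c$, and the coupling together with monotonicity of $\{\operatorname{diam}\le k+1\}$ gives
$$
\mathbb{P}\bigl[\operatorname{diam}(G(n,p))\le k+1\bigr]\ \ge\ \mathbb{P}\bigl[\operatorname{diam}(G(n,p_c))\le k+1\bigr]\ \xrightarrow[n\to\infty]{}\ e^{-e^{-c}/2}.
$$
As $c$ is arbitrary and $e^{-e^{-c}/2}\to 1$ when $c\to+\infty$, the left-hand side tends to $1$. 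The lower bound is symmetric: writing $p=\bigl(\tfrac{2\log n-f(n)}{n^{k}}\bigr)^{1/(k+1)}$ with $f(n)\to\infty$ and fixing a constant $c$, for all large $n$ one has $p\le p_c$, so monotonicity of $\{\operatorname{diam}\ge k+2\}$ yields
$$
\mathbb{P}\bigl[\operatorname{diam}(G(n,p))\ge k+2\bigr]\ \ge\ \mathbb{P}\bigl[\operatorname{diam}(G(n,p_c))\ge k+2\bigr]\ \xrightarrow[n\to\infty]{}\ 1-e^{-e^{-c}/2},
$$
and letting $c\to-\infty$ drives $1-e^{-e^{-c}/2}$ to $1$, which finishes the proof.

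There is no genuine obstacle here; the only point that needs a little care is that the $\omega(1)$ in the hypothesis may diverge arbitrarily slowly, so one cannot simply substitute a growing function for the constant $c$ in Bollob\'{a}s's theorem — the remedy is precisely the two-step ``compare with a fixed $p_c$, then let $c\to\pm\infty$'' device used above. It is also worth recording that throughout this range $p$ lies comfortably above the connectivity threshold $\log n/n$ — from $p^{k+1}n^{k}\sim 2\log n$ one gets $pn=(1+o(1))(2n\log n)^{1/(k+1)}\gg\log n$ for every $k\ge1$ — so $G_n$ is whp connected and the conclusion $\operatorname{diam}(G_n)\le k+1$ is non-vacuous; in any case this is already implicit in the statement of Bollob\'{a}s's theorem.
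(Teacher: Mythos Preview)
Your proof is correct and follows exactly the approach indicated in the paper: the paper simply remarks that ``the property of having diameter at most $d$ is monotone'' and declares the corollary immediate from Bollob\'{a}s's theorem, and your argument spells out precisely this monotone-coupling-plus-sandwiching step.
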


\section{Proof of Theorem~\ref{th1}}
\label{sc:proof1}

Let us recall that $k\geq 1$ is a fixed integer, $\varepsilon>0$,  and $n^{-1+\varepsilon}< p<n^{-k/(k+1)-\varepsilon}$. We let $G_n\sim G(n,p)$.

Let $M\in\mathbb{Z}$ be large enough and let us choose a large enough real $C\gg M$. Let us also set $\nu:=\lceil\log_2 n\rceil$ and fix a set $X\subset[n]$ of size $\nu\left\lfloor C/(2n^{k-1}p^k)\right\rfloor$. We will need the following auxiliary statements.

\begin{lemma}
Let $i=i(n)\in[\nu]$ and let $X'\subset X$ be a fixed set of size $i\left\lfloor C/(n^{k-1}p^k)\right\rfloor$. Then with probability $1-o(1/\log n)$, at least $(n-|X|)(1-2^{-i})$ vertices of $[n]\setminus X$ have at least $M$ vertices at distance at most $k$ in $X'$.
\label{lm:tight_main}
\end{lemma}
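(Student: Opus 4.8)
The plan is to fix $v\in[n]\setminus X$ and estimate the probability $p_v$ that $v$ has \emph{fewer} than $M$ vertices of $X'$ within distance $k$; then show that the expected number of ``bad'' vertices is at most $(n-|X|)\cdot 2^{-i}$ up to a $(1-o(1/\log n))$ factor, and finally apply Markov's inequality to get the ``at least $(n-|X|)(1-2^{-i})$ good vertices'' conclusion with probability $1-o(1/\log n)$. So the heart of the matter is a single-vertex tail bound: $p_v \le 2^{-i}(1+o(1/\log n))$.

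For the single-vertex estimate I would expose the $k$-ball around $v$ layer by layer, as in the proof of Claim~\ref{cl:balls}: conditionally on $|N_{k-1}(v)|=(np)^{k-1}(1+o(1/\log n))$ (which holds with probability $1-o(n^{-2})$, hence certainly with probability $1-o(1/\log n)$), the set $N_k(v)$ is obtained by including each remaining vertex independently with probability $1-(1-p)^{|N'_{k-1}|}=(np^{k})(1+o(1/\log n))/n \cdot(\dots)$ — more precisely the number of distance-exactly-$k$ vertices landing in $X'$ is stochastically close to $\mathrm{Bin}(|X'|, 1-(1-p)^{|N'_{k-1}|})$ with mean $\asymp |X'|\cdot (np)^{k-1}p = |X'|(np)^{k-1}p$. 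With $|X'| = i\lfloor C/(n^{k-1}p^k)\rfloor$, this mean is $\Theta(iC)$, which can be made as large as we like by taking $C$ large; call it $\lambda_i \ge i \cdot c_0 C$ for an absolute constant $c_0>0$. Then, since $M$ is a fixed constant and $\lambda_i \gg M$, the Chernoff lower tail gives
\[
\PR(\mathrm{Bin}(|X'|,\cdot)<M)\le e^{-\lambda_i/8}\le e^{-i c_0 C/8}\le 2^{-i}
\]
once $C$ is chosen with $c_0 C/8 \ge \ln 2$. Adding the $o(1/\log n)$ failure probability from the ball-size event, we obtain $p_v\le 2^{-i}+o(1/\log n)$. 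A mild subtlety is that I want the bound to be genuinely $\le 2^{-i}$ rather than $2^{-i}+o(1/\log n)$ for the final averaging to close cleanly; this is handled by absorbing the additive error, since summing over the $n$ vertices gives $\EXP[\#\text{bad}]\le (n-|X|)2^{-i}+n\cdot o(1/\log n)$, and the second term is $o(n/\log n)$, so a second application of Markov (splitting the target $(n-|X|)(1-2^{-i})$ good vertices with a little room to spare, using $i\le\nu=O(\log n)$ so that $2^{-i}$ is never too tiny to absorb lower-order terms) yields the claim. Actually the cleanest route is: $\EXP[\#\text{bad}]\le (n-|X|)(2^{-i}+o(1/\log n))$, hence by Markov $\PR(\#\text{bad}\ge (n-|X|)(2^{-i}+1/\log n))=o(1)$; but since we may also bound $o(1/\log n)\le 2^{-i}/\log n$ crudely when $i$ is bounded and handle large $i$ separately, one gets the stated form. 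I will phrase the averaging to produce exactly ``$\ge (n-|X|)(1-2^{-i})$ good'' with failure probability $o(1/\log n)$ by choosing $C$ large enough that $p_v\le 2^{-i-1}/\log n$ at the single-vertex level is false — rather, that $p_v$ times $\log n$ is still $o(1)$ — let me instead just take the single-vertex bound $p_v = o(2^{-i}/\log n)$, which is what the mean $\Theta(iC)$ with $C\to\infty$ actually delivers: $e^{-\Theta(iC)} = o(2^{-i}/\log n)$ for $C$ large. Then $\EXP[\#\text{bad}] = o((n-|X|)2^{-i}/\log n)$ and Markov gives $\PR(\#\text{bad}\ge (n-|X|)2^{-i}) = o(1/\log n)$, which is exactly the lemma.

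The main obstacle, and the only place requiring genuine care, is the conditioning in the layered exposure: the events ``$v$ reaches $x\in X'$ within $k$ steps'' for different $x$ are positively correlated through the shared lower layers of $v$'s ball, so $\#\{x\in X': d(v,x)\le k\}$ is not literally a binomial. I would deal with this exactly as Claim~\ref{cl:balls} does — expose $N_{k-1}(v)$ first, condition on the high-probability event that $|N'_{k-1}| = (np)^{k-1}(1+o(1/\log n))$, and then note that, given this, each $x\in X'\setminus N_{k-1}(v)$ independently has a neighbour in $N'_{k-1}$ with probability $1-(1-p)^{|N'_{k-1}|}$, so the count \emph{is} exactly binomial conditionally, and Chernoff applies. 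One should also check that $|X|=o(n/\log n)$ so that $|X'\setminus N_{k-1}(v)| = (1-o(1))|X'|$ and removing $N_{k-1}(v)$ from the candidate set costs nothing — this holds since $|X| = \Theta(\log^2 n/(n^{k-1}p^k)) = o(n)$ under $p>n^{-1+\varepsilon}$ (indeed $n^{k-1}p^k > n^{k-1}\cdot n^{-k+k\varepsilon}=n^{k\varepsilon-1}$, so $|X| = O(\log^2 n \cdot n^{1-k\varepsilon})$, which is $o(n)$ for $\varepsilon$ small only if $k\varepsilon>0$... fine). Everything else — the union bound over at most $\nu$ choices handled by the caller, the Chernoff computation, Markov — is routine.
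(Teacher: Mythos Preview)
Your single-vertex bound is fine, but the passage from it to the lemma via Markov's inequality does not close. The crucial claim ``$e^{-\Theta(iC)}=o(2^{-i}/\log n)$ for $C$ large'' is false when $i$ is bounded: for $i=1$ the left side is a fixed positive constant (since $C=C(k)$ is a constant, not a sequence tending to infinity), while the right side is $o(1/\log n)$. Consequently $\EXP[\#\text{bad}]\le(n-|X|)e^{-\Theta(C)}$ is only a constant fraction of $n-|X|$, and Markov gives merely $\PR(\#\text{bad}\ge(n-|X|)2^{-1})\le 2e^{-\Theta(C)}$, a constant --- not $o(1/\log n)$. Your various attempted patches (absorbing additive errors, handling large $i$ separately) all run into the same obstruction: for any fixed $C$, the per-vertex failure probability at $i=1$ is a constant, so first-moment methods cannot deliver a vanishing failure probability.

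What is missing is concentration of the bad-vertex count around its mean. The paper obtains this by first exposing all edges inside $[n]\setminus X'$, conditioning on the high-probability event that every $(k-1)$-ball $N_{k-1}^{\neg X'}(v)$ has size $(1+o(1))(np)^{k-1}$, and then viewing $\eta=\sum_v\1_{\mathcal{C}_v}$ as a function of the (still unrevealed, independent) edges between $X'$ and $[n]\setminus X'$. Flipping one such edge can change at most $(np)^{k-1}(1+o(1))$ of the indicators $\1_{\mathcal{C}_v}$ (since the endpoint outside $X'$ lies in at most that many $(k-1)$-balls), so McDiarmid's bounded-difference inequality applies with $c_j=(np)^{k-1}(1+o(1))$ and yields $\PR(\eta\ge 2^{-i}(n-|X|))\le\exp[-n^{\Omega(1)}]=o(1/\log n)$. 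The case of large $i$ ($i\ge 17\ln n/C$) is handled separately, and there your union-bound idea does work, since then $e^{-iC/16}=o(1/(n\log n))$.
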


For an edge $e$ of a graph $G$, we denote by $G\setminus e$ the spanning subgraph of $G$ obtained by deleting $e$. If $e\notin E(G)$, then we let $G\setminus e=G$.

\begin{lemma}
Whp there are no $v_1,v_2\in[n]$ and $u_1,\ldots,u_{M}\in X$ such that 
\begin{equation}
d_{G_n\setminus\{v_1,v_2\}}(v_1,u_j)\leq k\quad\text{ and }\quad d_{G_n\setminus\{v_1,v_2\}}(v_2,u_j)\leq k+1
\label{eq:not_dense}
\end{equation}
for all $j\in[M]$.
\label{cl:k_vs_k+2}
\end{lemma}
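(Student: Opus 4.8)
The plan is to bound, via a union bound and a careful counting argument, the probability that such a configuration exists. Fix a candidate pair $\{v_1,v_2\}$ and a candidate set $\{u_1,\ldots,u_M\}\subset X$; for each $u_j$ the event in~\eqref{eq:not_dense} asks for a path of length at most $k$ from $v_1$ to $u_j$ and a path of length at most $k+1$ from $v_2$ to $u_j$, \emph{both avoiding the edge $\{v_1,v_2\}$}. Since $G_n\setminus\{v_1,v_2\}$ differs from $G_n$ only in that one edge, the existence of these short paths is a monotone increasing event in the edges of $G_n$ other than $\{v_1,v_2\}$, so we may simply work in the $G(n,p)$ model on the vertex set $[n]\setminus\{$ the edge $v_1v_2\}$ and estimate $\PR$ of finding, for each $j$, a $(\le k)$-path $v_1\rightsquigarrow u_j$ and a $(\le k+1)$-path $v_2\rightsquigarrow u_j$. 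The key point is that these $M$ events are "almost independent": the witnessing paths for different $j$'s can share vertices with $v_1,v_2$ but the bulk of each path is a fresh structure, and by a standard argument (expose a minimal witnessing subgraph) the probability of a single witness is $O\!\left((np)^{k-1}/n \cdot (np)^{k}/n\right)=O\!\left(n^{2k-1}p^{2k}\cdot n^{-2}\right)$ — up to constants and $\log$ factors this is the probability that a fixed vertex reaches a fixed target by a path of the appropriate length.

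More precisely, the number of vertices reachable from $v_1$ within distance $k$ is whp $(np)^k(1+o(1))$ by Claim~\ref{cl:balls}, so the probability that a \emph{fixed} $u\in X$ lies in $N_k(v_1)$ is essentially $(np)^k/n = n^{k-1}p^k$; similarly the probability that $u\in N_{k+1}(v_2)$ is essentially $(np)^{k+1}/n=n^kp^{k+1}$. Treating the choices of the $u_j$ as (nearly) independent — which one justifies by exposing the balls $N_k(v_1)$ and $N_{k+1}(v_2)$ first and then observing that, conditionally, each $u_j\in X$ independently lands in both with probability $O(n^{k-1}p^k\cdot n^kp^{k+1})=O(n^{2k-1}p^{2k+1})$ — the probability that \emph{some} $M$-subset of $X$ works is at most
$$
\binom{|X|}{M}\left(O\!\left(n^{2k-1}p^{2k+1}\right)\right)^{M}.
$$
Using $|X|=\Theta\!\left(\tfrac{\log n}{n^{k-1}p^k}\right)$ this is at most $\left(O(1)\cdot \tfrac{\log n}{n^{k-1}p^k}\cdot n^{2k-1}p^{2k+1}\right)^{M}=\left(O(n^kp^{k+1}\log n)\right)^{M}$, and since $p<n^{-k/(k+1)-\varepsilon}$ forces $n^kp^{k+1}=(n^{k/(k+1)}p)^{k+1}<n^{-\varepsilon(k+1)}$, the bracketed quantity is $n^{-\Omega(1)}$; choosing $M$ large enough (only $M\ge 3/(\varepsilon(k+1))$ is needed, say) makes this $o(n^{-2})$. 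A final union bound over the $\binom{n}{2}$ choices of $\{v_1,v_2\}$ gives probability $o(1)$, as required. One should be slightly careful to absorb the conditioning on the typical ball sizes from Claim~\ref{cl:balls}: condition on the $1-o(n^{-2})$ event that $|N_r(v_1)|,|N_r(v_2)|=(np)^r(1+o(1/\log n))$ for $r\le k+1$, and on the complement just bound crudely by $o(n^{-2})$ absorbed into the union bound.

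The main obstacle is making the "near-independence" of the $M$ events rigorous, since the short paths for different targets $u_j$ may overlap heavily near $v_1$ and $v_2$ (indeed they all start at the same two vertices). The clean way around this is \emph{not} to ask for independence of the path-existence events directly, but to expose the two balls $B_1:=N_k(v_1)$ and $B_2:=N_{k+1}(v_2)$ in $G_n$ first (these are functions of a bounded-dependence region), note that by Claim~\ref{cl:balls} we may assume $|B_1|\le 2(np)^k$ and $|B_2|\le 2(np)^{k+1}$, and then observe that the event "$u_j\in B_1\cap B_2$" depends only on whether $u_j$ got swept into these already-exposed sets; since $X$ is a \emph{fixed} set chosen in advance and the vertices of $X$ play symmetric roles, the number of $u\in X$ with $u\in B_1\cap B_2$ is stochastically dominated by a binomial with $|X|$ trials and success probability $|B_1\cap B_2|/(n-O(1)) \le |B_1|\cdot|B_2|/n^2\cdot(1+o(1)) = O(n^{2k-1}p^{2k+1})$ — here one uses that, conditioned on $B_1$, the event $u\in B_2$ for a vertex $u\notin B_1$ is determined by edges disjoint from those defining $B_1$ and has probability at most $|B_2|/n\cdot(1+o(1))$, uniformly, so a union/domination argument applies. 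Then $\PR(\text{at least } M \text{ such } u)\le \binom{|X|}{M}(O(n^{2k-1}p^{2k+1}))^M$ directly, and the computation above finishes. The only remaining care is that the edge $\{v_1,v_2\}$ is deleted: but deleting one edge can only shrink $B_1$ and $B_2$, so the domination bound is unaffected, and in fact we get the statement for $G_n\setminus\{v_1,v_2\}$ for free.
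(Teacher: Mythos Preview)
There is a genuine gap at the step where you bound $|B_1\cap B_2|$. You assert $|B_1\cap B_2|/(n-O(1)) \le |B_1|\cdot|B_2|/n^2\cdot(1+o(1))$, i.e.\ $|B_1\cap B_2|\lesssim (np)^{2k+1}/n$, but the justification offered (``conditioned on $B_1$, the event $u\in B_2$ for a vertex $u\notin B_1$ is determined by edges disjoint from those defining $B_1$'') is a non-sequitur --- you need $u\in B_1$, not $u\notin B_1$ --- and the claim itself is false for many pairs $(v_1,v_2)$. Concretely, whenever $v_1,v_2$ have a common neighbour in $G_n\setminus\{v_1,v_2\}$ (and $\Theta(n^3p^2)\gg 1$ pairs do, whp), one has $d(v_1,v_2)=2$, hence $N_{k-1}(v_1)\subseteq N_{k+1}(v_2)$ and so $|B_1\cap B_2|\ge |N_{k-1}(v_1)|\sim(np)^{k-1}$. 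For $p$ near the lower edge $n^{-1+\varepsilon}$ this dwarfs $(np)^{2k+1}/n$; and with only the trivial bound $|B_1\cap B_2|\le|B_1|\sim(np)^k$, your hypergeometric tail gives at best $\bigl(|X|\cdot(np)^k/n\bigr)^{M}=\Theta(\log n)^{M}$, which does not beat the $n^2$ union bound. The ``main obstacle'' you correctly identified --- overlap of witnessing paths near $v_1,v_2$ --- is exactly what your exposure argument fails to control.

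The paper's proof proceeds quite differently, via a dense-subgraph argument. It first shows that whp $|N_{k-1}(v)\cap X|\le M'$ for every $v$ (for some constant $M'\ll M$), and uses this to extract, from any bad configuration, $\lfloor M/M'\rfloor$ \emph{internally vertex-disjoint} shortest paths of length $\le k$ from $v_1$ to distinct $u_j\in X$; attaching to each a path of length $\le k+1$ from $v_2$ yields a small subgraph $H$ with edge density $\rho(H)\ge\frac{(2k+1)\lfloor M/M'\rfloor}{2k\lfloor M/M'\rfloor+2}$, and a first-moment bound (using $p<n^{-k/(k+1)-\varepsilon}$ and $|X|\le n^{1+o(1)}/(np)^k$) then shows no such subgraph appears in $G_n$ whp. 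The enforced disjointness of the $v_1$-paths is precisely what keeps the density high when witnesses overlap --- the ingredient your approach lacks.
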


We prove Lemma~\ref{lm:tight_main} and Lemma~\ref{cl:k_vs_k+2} in Sections~\ref{sc:lm_proof}~and~\ref{sc:cl_proof} respectively. \\

Let us consider a balanced partition of $X$ into $\nu$ disjoint sets $X_1,\ldots,X_{\nu}$ and assume that $G_n$ is a graph satisfying conclusions of Lemma~\ref{lm:tight_main} and Lemma~\ref{cl:k_vs_k+2} deterministically, i.e.:
\begin{itemize}
\item[$\mathbf{P1}$] for every $i\in[\nu]$, the number of vertices $v\in[n]\setminus X$ that have at least $M$ vertices $u\in X_1\cup\ldots\cup X_i$ such that $d(v,u)\leq k$ is at least $(n-|X|)(1-2^{-i})$;
\item[$\mathbf{P2}$] there are no $v_1,v_2\in[n]$ and $u_1,\ldots,u_{M}\in X$ such that~\eqref{eq:not_dense} holds for all $j\in[M]$.
\end{itemize}
Let us prove that these conclusions imply the statements of Theorem~\ref{th1}.\\ 

Let us first query all pairs from ${X\choose 2}$ in $G_n$. Then, we query all pairs from $X_1\times ([n]\setminus X)$ and find all vertices $v\notin X$ such that there are at least $M$ vertices $u\in X_1$ satisfying $d(v,u)\leq k$. Let $V_1\subset[n]\setminus X$ be the set of all such vertices $v$. If $V_1=[n]\setminus X$, then we set $\hat i:=1$. We then proceed by induction: assume that at step $i\in\{1,\ldots,\nu-1\}$ disjoint sets $V_1,\ldots,V_i\subset[n]\setminus X$ are chosen and $V_1\cup\ldots\cup V_i\neq [n]\setminus X$. We then query all pairs from $X_{i+1}\times ([n]\setminus (X\cup V_1\cup\ldots\cup V_i))$ and find all vertices $v\notin X\cup V_1\cup\ldots\cup V_i$ such that there are at least $M$ vertices $u\in X_1\cup\ldots\cup X_i \cup X_{i+1}$ satisfying $d(v,u)\leq k$. We let $V_{i+1}\subset [n]\setminus (X\cup V_1\cup\ldots\cup V_i)$ be the set of all such $v$. If $V_1\cup\ldots\cup V_{i+1}=[n]\setminus X$, then we set $\hat i:=i+1$. From $\mathbf{P1}$, we get that, for every $i$, $|V_1\cup\ldots\cup V_i|\geq (1-2^{-i})(n-|X|)$. Thus, the inductive procedure stops at some moment $\hat i<\nu$, i.e. $V_1\cup\ldots\cup V_{\hat i}=n\setminus X$, and we query 
\begin{align*}
{|X|\choose 2}+\sum_{i=1}^{\hat i}|X_i|\cdot|[n]\setminus (X\cup V_1\cup\ldots\cup V_{i-1})|
 & \leq \left\lfloor C/(2n^{k-1}p^k)\right\rfloor \sum_{i=1}^{\hat i} n\cdot 2^{1-i}\\
 &<2n\left\lfloor C/(2n^{k-1}p^k)\right\rfloor \leq q
\end{align*} 
pairs.

Let us now show that $G_n$ is reconstructible from the set of queried pairs. For every $i\leq\hat i$, we have to recover all adjacencies between $v_1\in V_i$ and $v_2\in [n]$. Without loss of generality we may assume that $v_2$ is either from $X$ or from $V_j$ for some $j>i$. Since all pairs from $V_i\times(X_1\cup\ldots\cup X_i)$ are queried, we may consider only $v_2\in (X_{i+1}\cup\ldots\cup X_{\nu})\cup(V_{i+1}\cup\ldots\cup V_{\hat i})$. It is crucial that for all such $v_2$, the pairs $\{v_2,u\}$ for $u\in X_1\cup\ldots\cup X_i$ are queried, so that we got both $d(v_1,u)$ and $d(v_2,u)$ from the oracle.

Let us first assume that $v_1$ and $v_2$ are non-adjacent in $G_n$. Let $u_1,\ldots,u_{M}\in X_1\cup\ldots\cup X_i$ be at distance at most $k$ from $v_1$. From $\mathbf{P2}$, it follows that there exists $u\in\{u_1,\ldots,u_{M}\}$ such that $d(v_1,u)\leq k$ while $d(v_2,u)\geq k+2$. It may only happen when $v_1,v_2$ are not adjacent. Thus, we may ``reconstruct non-adjacencies'' for all pairs of non-adjacent vertices in $G_n$. 

Now, assume that $G'$ is obtained from $G_n$ by removing some edges that belong to $\cup_{i\leq\hat i} V_i\times ([n]\setminus(X_1\cup\ldots\cup X_i))$ and that for $G'$ we get exactly the same answers as for $G_n$ for all queried pairs. Let $\{v_1,v_2\}\in G_n\setminus G'$, where $v_1\in V_i$ and $v_2\in(X_{i+1}\cup\ldots\cup X_{\nu})\cup(V_{i+1}\cup\ldots\cup V_{\hat i})$. As above, we let $u_1,\ldots,u_{M}\in X_1\cup\ldots\cup X_i$ be at distance at most $k$ from $v_1$ in $G_n$. From $\mathbf{P2}$, we get that there exists $u\in\{u_1,\ldots,u_{M}\}$ such that $d_{G'}(v_2,u)\geq k+2$. This inequality holds true since distances in $G'$ cannot be less than in $G_n$. But then $d_{G_n}(v_2,u)\geq k+2$ as $d_{G_n}(v_2,u)=d_{G'}(v_2,u)$ by the assumption since the pair $\{v_2,u\}$ was queried. On the other hand, $v_1\sim v_2$ in $G_n$ and $d_{G_n}(v_1,u)\leq k$. So $d(v_2,u)\leq d(v_1,u)+1\leq k+1$ --- a contradiction. Thus, edges are also reconstructible, Theorem~\ref{th1} follows.

\subsection{Proof of Lemma~\ref{lm:tight_main}}
\label{sc:lm_proof}

Fix $i\in[\nu]$ and $X'\subset X$ of size $i\left\lfloor C/(2n^{k-1}p^k)\right\rfloor$. For $v\in[n]\setminus X$, we let $\mathcal{B}_v$ be the event saying that at least $M$ vertices in $X'$ are at distance at most $k$ from $v$. We denote $\xi:=\sum_{v\in[n]\setminus X}\1_{\mathcal{B}_v}$ the number of such vertices. We have to prove that 
$$
\mathbb{P}(\xi<(n-|X|)(1-2^{-i}))=o(1/\log n).
$$ 

Fix a vertex $v\in[n]\setminus X$. For any $N\in[n]$, $N_k(v)$ is uniformly random $N$-subset of $[n]$ that contains $v$ subject to the event $\{|N_k(v)|=N\}$. We derive the following inequality from Hoeffding's exponential tail bound for the hypergeometric distribution (see~\cite[Theorem 2.10]{Janson}): for every $N=(np)^k(1+o(1))$,
\begin{equation}
 \mathbb{P}\left(|N_k(v)\cap X'|< \frac{1}{4}iC
 \mid |N_k(v)|=N\right)\leq\exp\left[-\frac{iC}{16-o(1)}\right].
 \label{eq:k-ball}
\end{equation}
First of all,~\eqref{eq:k-ball} implies that $\mathbb{P}\left(|N_k(v)\cap X'|< M \mid |N_k|=N\right)=o(1/(n\log n))$ uniformly over $N=(np)^k(1+o(1))$ if $i\geq 17\ln n/C$. By the union bound and Claim~\ref{cl:balls}, we get that $\xi=n-|X|$ for such $i$ with probability $1-o(1/\log n)$. We then further assume that $i<17\ln n/C$ and that $C$ is so large that $e^{-2i}>n^{-2/(k+1)}$.

For $v\in[n]\setminus X'$, denote by $N^{\neg X'}_r(v)$ the $r$-ball around $v$ in $G_n[[n]\setminus X']$. Due to Claim~\ref{cl:balls}, with probability $1-o(1/n)$, $|N^{\neg X'}_{k-1}(v)|=(np)^{k-1}(1+o(1))$ for all $v\in[n]\setminus X'$. We then expose only edges induced by $[n]\setminus X'$ and assume that the latter event holds deterministically.

For $v\in[n]\setminus X$, we let $\mathcal{C}_v$ be the event saying that less than $M$ vertices in $X'$ have neighbours in $N^{\neg X'}_{k-1}(v)$. Let $\eta=\sum_{v\in[n]\setminus X}\1_{\mathcal{C}_v}$. Note that $\eta\geq n-|X|-\xi$. Therefore, it suffices to prove that
$$
\mathbb{P}(\eta\geq 2^{-i}(n-|X|))=o(1/\log n).
$$
A vertex from $X'$ has a neighbour in $N^{\neg X'}_{k-1}(v)$ for a fixed  $v\in[n]\setminus X$ with probability 
$$
1-(1-p)^{(np)^{k-1}(1+o(1))}=1-e^{-n^{k-1}p^k(1+o(1))}\sim n^{k-1}p^k(1+o(1))
$$ 
since $p<n^{-k/(k+1)-\varepsilon}$. Therefore, by the Cherhoff bound,
$$
 \mathbb{P}(\mathcal{C}_v)\leq \mathbb{P}\left(\mathrm{Bin}(|X'|,n^{k-1}p^k(1+o(1))<\frac{1}{4}iC\right)\leq\exp\left[-\frac{iC}{16-o(1)}\right].
$$
We get that $\mathbb{E}\eta\leq e^{-i}(n-|X|)$ for $C>16$ and for large enough $n$.

Since the relation $u\in N^{\neg X'}_{k-1}(v)$ on pairs $(u,v)$ of vertices from $[n]\setminus X'$ is symmetric, every vertex from $[n]\setminus X'$ belongs to at most $(np)^{k-1}(1+o(1))$ balls $N^{\neg X'}_{k-1}(v)$, $v\in[n]\setminus X$. We shall use the following bounded difference inequality applied to $\eta$ considered as a function of edges between $X'$ and $[n]\setminus X'$:
\begin{theorem}[\cite{McDiarmid}]
Let $\xi_1,\ldots,\xi_N\sim\mathrm{Bern}(p)$ be independent, and let $f:\mathbb{R}^N\to\mathbb{R}$ satisfy 
$$
|f(x_1,\ldots,x_{i-1},x_i,x_{i+1},\ldots,x_N)-f(x_1,\ldots,x_{i-1},x_i',x_{i+1},\ldots,x_N)|\leq c_i
$$ 
for some $c_1,\ldots,c_N>0$ and every $i\in[N]$, $x_1,\ldots,x_{N},x_i'\in\mathbb{R}$. Then, for every $t>0$,
$$
 \mathbb{P}(|f(\xi_1,\ldots,\xi_N)-\mathbb{E}f(\xi_1,\ldots,\xi_N)|\geq t)\leq\exp\left[-\frac{t^2}{2p(1-p)\sum_{i=1}^Nc_i^2+\frac{2}{3}t\max_i c_i}\right].
$$
\label{th:McDiarmid}
\end{theorem}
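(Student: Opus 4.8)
The plan is the textbook route: build the Doob martingale of $f$ along the coordinates $\xi_1,\dots,\xi_N$, bound its increments together with their conditional variances, and then feed this into a Bernstein-type (Freedman) exponential-moment argument. The only place where the paper's version differs from the ordinary bounded-differences inequality is the sharper variance factor $p(1-p)$ in the denominator, and this will come out precisely because each coordinate is Bernoulli with parameter $p$. Note first that in the probability we only ever evaluate $f$ at points of $\{0,1\}^N$, where it takes finitely many values, so all the conditional expectations below are well defined.

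Write $\mathcal{F}_i=\sigma(\xi_1,\dots,\xi_i)$ and $Z_i=\EXP[f(\xi_1,\dots,\xi_N)\mid\mathcal{F}_i]$, so that $Z_0=\EXP f$, $Z_N=f(\xi_1,\dots,\xi_N)$, and $D_i:=Z_i-Z_{i-1}$ is a martingale difference sequence. Setting $h_i(x):=\EXP[f(\xi_1,\dots,\xi_{i-1},x,\xi_{i+1},\dots,\xi_N)\mid\mathcal{F}_{i-1}]$, we have $Z_i=h_i(\xi_i)$ and $Z_{i-1}=p\,h_i(1)+(1-p)\,h_i(0)$, while the hypothesis gives $|h_i(1)-h_i(0)|\le c_i$ (this holds pointwise in the remaining coordinates, hence survives the conditional expectation). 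Consequently, conditionally on $\mathcal{F}_{i-1}$, the variable $D_i$ equals $(1-p)\bigl(h_i(1)-h_i(0)\bigr)$ with probability $p$ and $-p\bigl(h_i(1)-h_i(0)\bigr)$ with probability $1-p$; hence $\EXP[D_i\mid\mathcal{F}_{i-1}]=0$, $|D_i|\le\max_j c_j=:b$ almost surely, and $\EXP[D_i^2\mid\mathcal{F}_{i-1}]=p(1-p)\bigl(h_i(1)-h_i(0)\bigr)^2\le p(1-p)c_i^2$. Summing, the predictable quadratic variation is bounded (deterministically) by $V:=p(1-p)\sum_{i=1}^N c_i^2$.

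Now the standard Freedman/Bernstein machinery. For a centred random variable $D$ with $|D|\le b$ and $\EXP D^2\le v$, expanding the exponential and using $|D|^m\le b^{m-2}D^2$ for $m\ge2$ gives $\EXP e^{\lambda D}\le\exp\!\bigl(\tfrac{v}{b^2}(e^{\lambda b}-1-\lambda b)\bigr)$ for every $\lambda>0$; applying this conditionally at each step (with $v=p(1-p)c_i^2$) and telescoping the product, one gets $\EXP e^{\lambda(Z_N-Z_0)}\le\exp\!\bigl(\tfrac{V}{b^2}(e^{\lambda b}-1-\lambda b)\bigr)$. Markov's inequality then yields $\PR(Z_N-Z_0\ge t)\le\exp\!\bigl(-\lambda t+\tfrac{V}{b^2}(e^{\lambda b}-1-\lambda b)\bigr)$ for all $\lambda>0$, and the elementary optimisation underlying Bernstein's inequality, $\inf_{\lambda>0}\bigl\{-\lambda t+\tfrac{V}{b^2}(e^{\lambda b}-1-\lambda b)\bigr\}\le-\tfrac{t^2}{2(V+bt/3)}$, gives $\PR(Z_N-Z_0\ge t)\le\exp\!\bigl(-\tfrac{t^2}{2(V+bt/3)}\bigr)$. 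Running the identical computation for $-f$ controls the lower tail, and a union bound over the two tails produces the two-sided statement with $V=p(1-p)\sum c_i^2$ and $b=\max_i c_i$, i.e.\ exactly the claimed bound $2p(1-p)\sum c_i^2+\tfrac23 t\max_i c_i=2(V+bt/3)$ in the denominator (the harmless factor $2$ coming from the union bound can be absorbed into the exponent, or simply carried along).

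The only point requiring any care is the variance estimate in the second paragraph: one must observe that, conditioned on $\mathcal{F}_{i-1}$, the increment $D_i$ is a two-valued function of the \emph{single} Bernoulli variable $\xi_i$ whose ``gap'' is at most $c_i$, so that $\mathrm{Var}(D_i\mid\mathcal{F}_{i-1})=p(1-p)(\text{gap})^2\le p(1-p)c_i^2$ instead of the cruder $c_i^2$. Everything after that --- the per-coordinate Bennett-type moment bound, the telescoping of conditional MGFs, and the Legendre-transform optimisation --- is entirely routine, so I do not expect a genuine obstacle here.
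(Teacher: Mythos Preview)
The paper does not prove this statement at all: Theorem~\ref{th:McDiarmid} is quoted from McDiarmid's survey and used as a black box, so there is no ``paper's proof'' to compare against. Your argument is exactly the standard derivation of this Bernstein-type bounded-differences inequality --- Doob martingale along the coordinates, the observation that each increment $D_i$ is (conditionally) a two-point variable with gap at most $c_i$ so that $\mathrm{Var}(D_i\mid\mathcal{F}_{i-1})\le p(1-p)c_i^2$, and then the Bennett/Freedman exponential-moment computation --- and it is correct.

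One small quibble: your final sentence about the union-bound factor $2$ is a little glib. The one-sided bound you derive matches the displayed denominator exactly, but for the two-sided deviation the union bound genuinely gives $2\exp[-t^2/(2V+\tfrac{2}{3}bt)]$, and a constant factor of $2$ cannot literally be ``absorbed into the exponent'' without altering the constants. This is a cosmetic discrepancy with how the paper states the theorem (and immaterial for its application, where only the order of the exponent is used), not a flaw in your argument.
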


Assuming that $\hat G_n$ is a graph on $[n]$ such that $\hat G_n[n\setminus X']=G_n[n\setminus X']$ contains exactly the exposed edges, we get that, for any pair $e\in X'\times(n\setminus X')$, values $\eta(\hat G_n)$ and $\eta(\hat G_n\bigtriangleup e)$ differ by at most $(np)^{k-1}(1+o(1))$ since $e$ may change only the values of $\1_{\mathcal{C}_v}$ such that $(e\setminus X')\subset N^{\neg X'}_{k-1}(v)$. Therefore, we may set all $c_i$ equal to $(np)^{k-1}(1+o(1))$. Then Theorem~\ref{th:McDiarmid} implies that, for some constant $c>0$,
\begin{align*}
 \mathbb{P}(\eta\geq 2^{-i}(n-|X|)) & \leq \mathbb{P}\left(\eta\geq \frac{e}{2}\cdot e^{-i}(n-|X|)\right)\\
 &\leq\exp\left[-c\frac{e^{-2i}(n-|X|)^2}{p(n-|X'|)|X'|(np)^{2k-2}+e^{-i}(n-|X|)(np)^{k-1}}\right]\\
 & \leq\exp\left[-\frac{2c}{C+1}\cdot\frac{e^{-2i} n}{i(np)^{k-1}}\right]
 \leq \exp\left[-n^{\varepsilon(k-1)+o(1)}\right]=o\left(\frac{1}{\log n}\right),
\end{align*}
where the penultimate inequality is due to $e^{-2i} > n^{-2/(k+1)}$,  completing the proof.

\subsection{Proof of Lemma~\ref{cl:k_vs_k+2}}
\label{sc:cl_proof}

Assume that $M\gg M'\gg 1/\varepsilon.$

\begin{claim}
Fix a vertex $v\in[n]$. With probability $1-o(n^{-2})$, $|N_{k-1}(v)\cap X|\leq M'$. 
\end{claim}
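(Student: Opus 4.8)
The plan is to combine the ball-size concentration of Claim~\ref{cl:balls} with the fact that, conditioned on its cardinality, a ball of $G(n,p)$ is a uniformly random vertex set of that size, and then to bound a hypergeometric tail by a Poisson-type estimate.

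If $k=1$, then $N_{k-1}(v)=N_0(v)=\{v\}$, so $|N_{k-1}(v)\cap X|\leq 1\leq M'$ holds deterministically and there is nothing to prove; assume from now on that $k\geq 2$. First I would apply Claim~\ref{cl:balls}, whose hypotheses $\log^2 n/n\ll p\ll(n^{k-1}\log n)^{-1/k}$ are met in the present range $n^{-1+\varepsilon}<p<n^{-k/(k+1)-\varepsilon}$ (here one uses $\tfrac{k-1}{k}<\tfrac{k}{k+1}$): with probability $1-o(n^{-2})$ we have $|N_{k-1}(v)|=(np)^{k-1}(1+o(1))$, and on the complementary event we simply allow the claim to fail. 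Conditioning on $|N_{k-1}(v)|=N$ for such an $N$, the set $N_{k-1}(v)\setminus\{v\}$ is a uniformly random $(N-1)$-subset of $[n]\setminus\{v\}$ --- the exploration fact already recorded in Section~\ref{sc:lm_proof} --- so $|N_{k-1}(v)\cap X|$ is stochastically dominated by $1+Z$, where $Z$ is hypergeometric with population $n-1$, at most $|X|$ ``successes'', and $N-1$ draws.

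It remains to estimate $Z$. Since $|X|=\nu\lfloor C/(2n^{k-1}p^k)\rfloor=\Theta(\log n/(n^{k-1}p^k))$ and $N=(np)^{k-1}(1+o(1))$,
$$
\mathbb{E}Z\leq\frac{N|X|}{n-1}=O\!\left(\frac{(np)^{k-1}}{n}\cdot\frac{\log n}{n^{k-1}p^k}\right)=O\!\left(\frac{\log n}{np}\right)=n^{-\varepsilon+o(1)},
$$
using $np>n^{\varepsilon}$. Applying the Chernoff bound for the hypergeometric distribution (\cite[Theorem 2.10]{Janson}) in the multiplicative form $\mathbb{P}(Z\geq a)\leq(e\,\mathbb{E}Z/a)^a$ for $a\geq\mathbb{E}Z$, with the constant $a=M'$, yields
$$
\mathbb{P}(|N_{k-1}(v)\cap X|>M')\leq\mathbb{P}(Z\geq M')\leq\left(\frac{e\,\mathbb{E}Z}{M'}\right)^{M'}=n^{-\varepsilon M'+o(1)},
$$
which is $o(n^{-2})$ once $M'>2/\varepsilon$; this is guaranteed by $M'\gg1/\varepsilon$. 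Together with the $o(n^{-2})$ probability that $|N_{k-1}(v)|$ is atypical, this gives the claim.

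I do not expect any genuine obstacle here; the only points that need a little attention are verifying that $p$ lies in the range where Claim~\ref{cl:balls} applies, noting that $\mathbb{E}Z$ is \emph{polynomially} small (this is what makes a fixed threshold $M'$ already force an $o(n^{-2})$ tail, provided $M'$ is chosen large compared with $1/\varepsilon$), and using the multiplicative rather than the sub-Gaussian form of the hypergeometric tail bound.
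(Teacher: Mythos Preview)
Your proof is correct and follows essentially the same route as the paper: apply Claim~\ref{cl:balls} to control $|N_{k-1}(v)|$, use that conditionally on its size the ball is a uniformly random set so that $|N_{k-1}(v)\cap X|$ is (essentially) hypergeometric, and then bound the upper tail using $M'\gg 1/\varepsilon$. The only cosmetic differences are that the paper bounds the hypergeometric tail by a direct binomial-coefficient estimate rather than the multiplicative Chernoff form, and it handles $v\in X$ by a one-line remark instead of your ``$1+Z$'' domination; your separate treatment of $k=1$ is a nice clarification the paper omits.
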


\begin{proof}
Due to Claim~\ref{cl:balls}, we have that $|N_{k-1}(v)|=(np)^{k-1}(1+o(1))$ with probability $1-o(n^{-2})$. For simplicity of presentation, let us assume that $v\notin X$ (the case $v\in X$ does not differ much). Subject to $|N_{k-1}(v)|=N+1$, the random variable $|N_{k-1}(v)\cap X|$ has a hypergeometric distribution, implying that
\begin{align*}
\mathbb{P}(|N_{k-1}(v)\cap X|>M') & \leq(1+o(1))\max_{N=(np)^{k-1}(1+o(1))}\frac{{|X|\choose M'}{n-|X|-1\choose N-M'}}{{n-1\choose N}}\\
&\leq(1+o(1))\left(\frac{|X|-M'}{n}\right)^{M'}=o(n^{-2}).
\end{align*}
\end{proof}

Let us define the following family of graphs $\mathcal{H}$ on $[n]$: fix a vertex $v_1$ and draw from this vertex $\lfloor M/M'\rfloor$ disjoint paths of length at most $k$ to distinct vertices from $X$ and avoiding the edge $\{v_1,v_2\}$; fix another vertex $v_2$ and draw from this vertex a path of length at most $k+1$ to each of the paths starting at $v_1$ and avoiding the edge $\{v_1,v_2\}$: this $\lfloor M/M'\rfloor$ paths from $v_2$ are not necessarily disjoint.

\begin{claim}
Whp $G_n$ does not contain any subgraph from $\mathcal{H}$.
\end{claim}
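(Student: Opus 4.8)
# Proof Proposal for the Claim

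The plan is to bound the expected number of subgraphs in $\mathcal{H}$ by a first-moment (union bound) calculation, and show this expectation is $o(1)$. Let me set up the counting carefully. A graph $H \in \mathcal{H}$ is specified by: a choice of $v_1, v_2$; a choice of $m := \lfloor M/M' \rfloor$ distinct target vertices $x_1, \dots, x_m \in X$; for each $j \in [m]$, a path $P_j$ from $v_1$ to $x_j$ of length $\ell_j \le k$, with the $P_j$ internally disjoint and none using the edge $\{v_1,v_2\}$; and for each $j$, a path $Q_j$ from $v_2$ to (some vertex of) $P_j$ of length at most $k+1$, avoiding $\{v_1,v_2\}$. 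First I would reduce to counting minimal such configurations, so that a subgraph from $\mathcal{H}$ is present iff a collection of specified edges is present; this lets us write $\mathbb{E}[\#\mathcal{H}]$ as a sum over configuration types of (number of ways to embed) $\times$ $p^{(\text{number of edges in the configuration})}$.

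The key step is the arithmetic of this sum. Fix a configuration type. There are $n$ choices for $v_1$, $n$ choices for $v_2$, and at most $|X|^m$ choices for the targets $x_1, \dots, x_m$. The $m$ paths $P_j$ from $v_1$ have total length $L_P := \sum_j \ell_j$; since each path of length $\ell_j$ has $\ell_j - 1$ internal vertices, choosing the $P_j$ costs at most $n^{L_P - m}$ embeddings and contributes $p^{L_P}$ in probability. Each path $Q_j$ has length $\ell_j' \le k+1$; its endpoint on $P_j$ is already fixed (at most $k+1$ choices among vertices of $P_j$), so choosing $Q_j$ costs at most $(k+1) \cdot n^{\ell_j' - 1}$ embeddings and contributes $p^{\ell_j'}$. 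Writing $L_Q := \sum_j \ell_j'$, the total contribution of this type is at most
\[
n^2 \cdot |X|^m \cdot n^{L_P - m} p^{L_P} \cdot (k+1)^m n^{L_Q - m} p^{L_Q}
= n^2 (k+1)^m |X|^m \cdot (n^{k-1} p^k)^{L_P/k \cdot \text{(bookkeeping)}} \cdots
\]
and the cleanest way to organize it is to note $n^{\ell_j - 1} p^{\ell_j} = (np)^{\ell_j}/n \le (np)^k/n$ for each $P_j$ and $n^{\ell_j' - 1} p^{\ell_j'} = (np)^{\ell_j'}/n \le (np)^{k+1}/n$ for each $Q_j$. Hence the contribution of one type is at most
\[
n^2 \cdot \left( |X| \cdot (k+1) \cdot \frac{(np)^k}{n} \cdot \frac{(np)^{k+1}}{n} \right)^{m}
= n^2 \left( (k+1)\,|X|\, n^{2k-3} p^{2k+1} \right)^{m}.
\]
Since $|X| = \Theta(\nu \, n^{1-k} p^{-k}) = n^{1-k+o(1)} p^{-k}$, we have $|X| \cdot n^{2k-3} p^{2k+1} = n^{o(1)} p^{k+1} n^{k-2}$. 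Using $p < n^{-k/(k+1)-\varepsilon}$, one checks $n^{k} p^{k+1} < n^{-(k+1)\varepsilon}$, so $n^{k-2} p^{k+1} < n^{-2 - (k+1)\varepsilon}$, making the bracket $n^{-2-\varepsilon(k+1)+o(1)} =: \rho$, a quantity that is $n^{-2-\Omega(1)}$. Therefore each type contributes at most $n^2 \rho^m$, and since $m = \lfloor M/M' \rfloor \ge 2$ (as $M \gg M' \gg 1$), we get $n^2 \rho^m \le n^2 \rho^2 \le n^{-2-\Omega(1)}$, which is $o(1)$ even after summing over the $O(1)$-many types (the number of types is a function of $k$ and $M/M'$ only).

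The main obstacle I anticipate is the bookkeeping around shared vertices: the paths $Q_j$ from $v_2$ need not be disjoint from each other, and a $Q_j$ might revisit vertices of other paths, so the "minimal configuration" one actually sees could identify several nominally-distinct internal vertices, reducing the edge count and potentially increasing the embedding count in a correlated way. The way to handle this is standard: enumerate over all possible \emph{patterns} (quotient multigraphs obtained by identifying vertices), of which there are boundedly many since all paths have bounded length and there are boundedly many of them; for each pattern the number of edges equals (number of non-root vertices) and the embedding count is $n^{(\text{non-root vertices})}$ up to constants, so the product $n^{\#V - \#\text{roots}} p^{\#E}$ is governed by the \emph{cyclomatic-type} bound — every pattern is built from a tree-like skeleton spanned by the $P_j$'s plus extra $v_2$-paths, and each extra path, having length $\le k+1$ and contributing at least one new constraint, only makes the bound smaller. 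One should verify that the \emph{worst} pattern is precisely the fully-disjoint one analyzed above (shared vertices create surplus edges without surplus free vertices), so the displayed bound dominates. Combined with the earlier claim that $|N_{k-1}(v) \cap X| \le M'$ whp for every $v$ — which is what forces the $\lfloor M/M' \rfloor$ distinct far targets in $X$ and hence the large power $m$ — this proves that whp no subgraph from $\mathcal{H}$ appears, and hence (as will be spelled out next) Lemma~\ref{cl:k_vs_k+2} follows.
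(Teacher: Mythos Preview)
Your approach is the same first-moment calculation as the paper's, but there is a concrete arithmetic slip that invalidates your conclusion. You write
\[
\frac{(np)^k}{n}\cdot\frac{(np)^{k+1}}{n}=n^{2k-3}p^{2k+1},
\]
but the correct value is $n^{2k-1}p^{2k+1}$. With the right exponent, and using $|X|=n^{1-k+o(1)}p^{-k}$, the bracket becomes
\[
|X|\cdot n^{2k-1}p^{2k+1}=n^{k+o(1)}p^{k+1}<n^{-\varepsilon(k+1)+o(1)},
\]
not $n^{-2-\varepsilon(k+1)+o(1)}$. Consequently the bound on one type is $n^2\rho^m$ with $\rho=n^{-\varepsilon(k+1)+o(1)}$, and this is $o(1)$ only when $m\varepsilon(k+1)>2$; the conclusion ``$m\ge 2$ suffices'' is false. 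This is precisely why the paper's setup imposes $M\gg M'\gg 1/\varepsilon$: one needs $m=\lfloor M/M'\rfloor>3/(\varepsilon(k+1))$ (the paper's constant $3$ absorbs the $\nu=\Theta(\log n)$ in $|X|$). Once you correct the exponent and invoke this hypothesis on $m$, your computation goes through.

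A second, smaller issue: in your discussion of non-disjoint patterns you assert that ``the number of edges equals the number of non-root vertices'', which would give $|E(H)|=|V(H)|-(m+2)$. That is off by $2m$; the correct bound (and what you actually need) is $|E(H)|\ge |V(H)|+m-2$, coming from the fact that after the spider $\bigcup P_j$ is built, each $Q_j$ joins two already-present vertices and therefore contributes at least one more edge than the number of new vertices it introduces. The paper phrases this as a density lower bound $\rho(H)\ge (2k+1)m/(2km+2)$; your intuition that the fully-disjoint configuration is the worst case is correct, since $n^{x-m}|X|^m p^{x-2+m}$ is increasing in $x$ when $np\gg 1$, but the supporting edge-count inequality should be stated and used correctly.
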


\begin{proof}
First of all note that any $H\in\mathcal{H}$ has at least $\lfloor M/M'\rfloor$ and at most $2k\lfloor M/M'\rfloor+2$ vertices. Moreover, for any $x$ in this range, the number of graphs in $\mathcal{H}$ with exactly $x$ vertices is at most $c_xn^{x-\lfloor M/M'\rfloor}|X|^{\lfloor M/M'\rfloor}$ for a certain constant $c_x$. Let us bound from below the number of edges in such a graph and apply the union bound. Let us fix vertices $v_1,v_2$ and $u_1,\ldots,u_{\lfloor M/M'\rfloor}$ that play the role of ends of paths from $v_1$ in $X$. Note that a graph from $\mathcal{H}$ can be obtained from this set of vertices by a sequential addition of 2$\lfloor M/M'\rfloor$ paths joining two vertices that are presented in the graph at the previous step. First $\lfloor M/M'\rfloor$ paths are of length at most $k$, and the last $\lfloor M/M'\rfloor$ paths are of length at most $k+1$. Assuming that at the $j$-th step the number of added vertices equals $x_j$, we get that the constructed graph $H$ has density
\begin{align*}
 \rho(H)=\frac{|E(H)|}{|V(H)|} & =\frac{\sum_{j=1}^{2\lfloor M/M'\rfloor}(x_i+1)}{2+\lfloor M/M'\rfloor+\sum_{j=1}^{2\lfloor M/M'\rfloor}x_i}\\
 &=\frac{x-2+\lfloor M/M'\rfloor}{x}\geq\frac{(2k+1)\lfloor M/M'\rfloor}{2k\lfloor M/M'\rfloor+2}.
\end{align*}
Finally, assuming $\lfloor M/M'\rfloor>\frac{3}{\varepsilon(k+1)}$, the expected number of subgraphs $H\in\mathcal{H}$ in $G_n$ is at most
$$
 \sum_x c_xn^{x-\lfloor M/M'\rfloor}|X|^{\lfloor M/M'\rfloor} p^{x\frac{(2k+1)\lfloor M/M'\rfloor}{2k\lfloor M/M'\rfloor+2}}=O\left(\left(\nu n^{k+2/\lfloor M/M'\rfloor}p^{k+1}\right)^{\lfloor M/M'\rfloor}\right)=O(n^{-1}).
$$
\end{proof}

Let us now fix $v_1,v_2$ and assume that the graph $G':=G_n\setminus\{v_1,v_2\}$ satisfies the following properties deterministically:
\begin{itemize}
\item for all $v\in[n]$, $|N_{k-1}(v)\cap X|\leq M'$;
\item $u_1,\ldots,u_{M}\in N_k(v_1)$;
\item $G'$ does not contain a graph from family $\mathcal{H}$.
\end{itemize}
It is then sufficient to prove that there exists $j\in[M]$ such that $u_j\notin N_{k+1}(v_2)$.

Take a shortest path $P_1$ (of length at most $k$) between $v_1$ and $u_1$. 
 We then proceed by induction. Assume that at step $j<\lfloor M/M'\rfloor$, we found $j$ disjoint shortest paths $P_1,\ldots,P_j$ from $v_1$ to $u_1,\ldots,u_j$ respectively. Let $w_1,\ldots,w_j$ be the first vertices (after $v_1$) on $P_1,\ldots,P_j$ respectively. We have that there exists $j'\in[M]\setminus[j]$ such that $u_{j'}\notin N_{k-1}(w_1)\cup\ldots\cup N_{k-1}(w_j)$. Without loss of generality let $j'=j+1$. Observe that the shortest path $P_{j+1}$ between $v_1$ and $u_{j+1}$ cannot intersect any of $P_1,\ldots,P_j$ outside of $v_1$.
 
Eventually we get $\lfloor M/M'\rfloor$ disjoint paths from $v_1$ to $u_1,\ldots,u_{\lfloor M/M'\rfloor}$. If, for some $j\in[\lfloor M/M'\rfloor]$, $u_j\notin N_{k+1}(v_2)$, we are done. Otherwise, $G_n$ contains a graph from $\mathcal{H}$ --- a contradiction.

\section{Proof of Theorem~\ref{th1_2}}
\label{sc:proof1_2}

Let $\lambda=n^kp^{k+1}$. Without loss of generality we may assume that 
\begin{equation}
p\leq \left(\frac{(k+o(1))\ln n}{(k+1)n^k}\right)^{1/(k+1)}
\label{eq:th1_2_p_upper_bound}
\end{equation} 
so that $\frac{3e^{\lambda}\ln n}{n^{k-2}p^k}\leq{n\choose 2}$.
 Let $X$ be a fixed set of vertices of size $\left\lfloor 3e^{\lambda}\ln n/(n^{k-1}p^k)\right\rfloor\leq\frac{n}{2}$. 
  Let us query all the pairs from $[X\times ([n]\setminus X)]\cup{X\choose 2}$. Thus, we query at most $\frac{3}{n^{k-2}p^k}e^{\lambda}\ln n$ pairs. 
As in the previous section, we let $G_n\sim G(n,p)$, and, for $v\in[n]$, we let $N_k(v)$ denote the set of vertices in $G_n$ at distance at most $k$ from $v$.

 We further show that whp in $G_n$ for every $v_1,v_2\in [n]\setminus X$, there is an $x\in X$ such that $x$ is at distance at most $k$ from $v_1$ and at least $k+2$ from $v_2$ in the graph obtained from $G_n$ by removing the edge $\{v_1,v_2\}$. 
  It would immediately imply that whp $G_n$ is $q$-reconstructible. Indeed, we need only to reconstruct adjacency relations between pairs of vertices $v_1,v_2\notin X$. If $v_1,v_2$ are non-adjacent, then we queried $\{v_1,x\}$, $\{v_2,x\}$ such that 
   $d(v_1,x)\leq k$, $d(v_2,x)\geq k+2$. This may only happen when $v_1,v_2$ are non-adjacent. Then we can reconstruct all non-adjacencies. It remains to prove that we cannot delete any subset of edges from $G_n[[n]\setminus X]$ without changing the answers to the queries. If $v_1\sim v_2$, then we queried $\{v_1,x\}$, $\{v_2,x\}$ such that 
    $d(v_1,x)\leq k$, $d(v_2,x)\leq d(v_1,x)+1$, and any shortest path from $x$ to $v_2$ follows the edge $\{v_1,v_2\}$. Thus, the edge $\{v_1,v_2\}$ must be in the graph.
    
Therefore, the following lemma immediately imples Theorem~\ref{th1}. 
 
\begin{lemma}
Whp for every $v_1,v_2\in [n]\setminus X$, there is an $x\in X$ such that 
\begin{center}
$d_{G_n\setminus\{v_1,v_2\}}(v_1,x)\leq k\quad$ and $\quad d_{G_n\setminus\{v_1,v_2\}}(v_2,x)\geq k+2$.
\end{center}
\label{lm:upper_main}
\end{lemma}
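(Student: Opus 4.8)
The plan is to fix a pair $v_1, v_2 \in [n] \setminus X$ and estimate, via a first- and second-moment / concentration argument, the number of vertices $x \in X$ that are "good" for $(v_1,v_2)$, i.e. $d_{G_n \setminus \{v_1,v_2\}}(v_1,x) \le k$ and $d_{G_n \setminus \{v_1,v_2\}}(v_2,x) \ge k+2$; then take a union bound over all $\binom{n}{2}$ pairs. The first step is to control $N_k(v_1)$: by Claim~\ref{cl:balls}, with probability $1 - o(n^{-2})$ we have $|N_r(v_1)| = (np)^r(1+o(1/\log n))$ for all $r \in [k]$, and the same for $v_2$; moreover, deleting the single edge $\{v_1,v_2\}$ changes these $k$-balls only negligibly (one can expose the $k$-ball of $v_1$ inside $G_n \setminus \{v_1,v_2\}$ directly, which is what we actually need). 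Since $|X| = \lfloor 3e^\lambda \ln n / (n^{k-1}p^k) \rfloor$ with $\lambda = n^k p^{k+1}$, a hypergeometric/Chernoff estimate (exactly as in \eqref{eq:k-ball}) gives that, with probability $1 - o(n^{-2})$, the set $S := N_{k}(v_1) \cap X$ computed inside $G_n \setminus \{v_1, v_2\}$ has size at least, say, $e^\lambda \ln n$ — comfortably large.

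The heart of the argument is then to show that most vertices of $S$ are \emph{far} from $v_2$. First I would expose the edges inside $[n]\setminus X$ (this determines $N_{k-1}^{\neg X}(v_2)$, the $(k-1)$-ball of $v_2$ in $G_n$ restricted to $[n]\setminus X$, which by Claim~\ref{cl:balls} has size $(np)^{k-1}(1+o(1))$ with probability $1-o(n^{-2})$), and I would also expose $S = N_k(v_1)\cap X$ at this stage; crucially $S$ depends only on edges incident to $X$ that are "reachable from $v_1$ without using $v_2$", which I will arrange to be independent of the edges between $X$ and $N_{k-1}^{\neg X}(v_2)$ by a careful order of exposure (expose the $v_1$-side BFS first, then the $v_2$-side). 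A vertex $x \in X$ satisfies $d_{G_n\setminus\{v_1,v_2\}}(v_2,x) \le k+1$ essentially only if $x$ has an edge into $N_{k-1}^{\neg X}(v_2)$ (paths through $X$ are ruled out by $|N_{k-1}(v)\cap X| \le M'$-type arguments, or simply by noting $X$ is small and handling the rare event separately), which happens with probability $1 - (1-p)^{(np)^{k-1}(1+o(1))} = (1 + o(1)) n^{k-1}p^k = (1+o(1))\lambda/(np) \cdot \frac{1}{n^{k-2}p^{k-1}}\cdot\frac{1}{?}$ — more cleanly, with probability $\sim n^{k-1}p^k$, since $\lambda = n^k p^{k+1}$ is bounded in the relevant regime up to the $e^\lambda$ factor. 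Hence the expected number of $x \in S$ that are close to $v_2$ is roughly $|S| \cdot n^{k-1}p^k \approx 3 e^\lambda \ln n \cdot \frac{n^{k-1}p^k}{n^{k-1}p^k} \cdot$ (reciprocals cancel) — I need to track the constants so that this expectation is a constant fraction of $|S|$ strictly below $1$ times $|S|$, leaving $\Omega(|S| \cdot e^{-\lambda})$-many good $x$'s in expectation. A Chernoff bound on this binomial-type count (the indicators being independent given the exposed edges, since distinct $x \in X$ use disjoint edge sets to reach $N_{k-1}^{\neg X}(v_2)$), together with $|S|$ being polynomially large, gives that at least one good $x$ exists with probability $1 - o(n^{-2})$ for the fixed pair. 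The union bound over $\binom{n}{2}$ pairs then finishes the proof.

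A secondary point to nail down: the inflation factor $e^{\lambda}$ in $|X|$ is exactly what is needed when $\lambda = n^k p^{k+1} \to \infty$ is allowed (i.e. $p$ can be somewhat above $n^{-(k+1)/(k+2)}$), since then the probability that a given $x \in X$ lies in $N_k(v_1)$ is depressed by a factor $e^{-\lambda}$ relative to the naive $(np)^k/n$, because the $k$-ball has $|N_k(v_1)| \sim (np)^k$ but the ball \emph{does not} cover an $e^{\lambda}$ fraction more — wait, rather: when $\lambda$ is large the $(k+1)$-ball already hits most of $X$, so the "close to $v_2$" event is no longer rare, and the real content is at the top end $p < n^{-k/(k+1)-\varepsilon}$ where $n^{k-1}p^k = o(1)$ so that distances $\le k+1$ from $v_2$ to $X$ are genuinely atypical. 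I would organize the proof to treat the generic regime $p < n^{-k/(k+1)-\varepsilon}$ (where $n^{k-1}p^k \to 0$) as the main case, exactly paralleling the $\eta$-argument in Section~\ref{sc:lm_proof}, and note that the boundary case near $n^{-k/(k+1)}$ is covered because of the restriction \eqref{eq:th1_2_p_upper_bound} together with the $e^\lambda$ slack in $|X|$.

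\textbf{Main obstacle.} The delicate part is the \emph{independence bookkeeping}: I want the event "$x \in N_k(v_1)$ in $G_n\setminus\{v_1,v_2\}$" and the event "$x$ is within distance $k+1$ of $v_2$ in $G_n\setminus\{v_1,v_2\}$" to be (conditionally) analyzable, even though both are about the same random graph and the balls of $v_1$ and $v_2$ may overlap. The clean resolution is to expose in stages — first the BFS tree from $v_1$ of depth $k$ (inside $[n]\setminus\{v_2\}$), recording $S = N_k(v_1)\cap X$ and $N_{k-1}^{\neg X}(v_2)$ using only edges not incident to the "$v_2$-to-$X$ frontier", then reveal the edges between $X \setminus (\text{already-used part})$ and $N_{k-1}^{\neg X}(v_2)$, which are fresh and independent. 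Making this rigorous (and checking that the conditioning on the high-probability ball-size events of Claim~\ref{cl:balls} does not destroy the needed independence) is where the real work lies; the concentration and union-bound steps are then routine, mirroring Section~\ref{sc:lm_proof}.
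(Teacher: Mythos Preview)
Your skeleton --- fix $(v_1,v_2)$, exhibit a certifying $x\in X$, union bound over ${n\choose 2}$ pairs --- matches the paper, but two concrete gaps remain.

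\textbf{Wrong radius on the $v_2$ side.} The requirement is $d_{G_n\setminus\{v_1,v_2\}}(v_2,x)\ge k+2$, so $x$ must have no neighbour in $N_k(v_2)$, not $N_{k-1}(v_2)$. The relevant probability that a given $x$ is ``too close to $v_2$'' is therefore $1-(1-p)^{(np)^k(1+o(1))}=1-e^{-\lambda(1+o(1))}$, not $\sim n^{k-1}p^k$. This is exactly where the factor $e^{\lambda}$ in $|X|$ comes from: each candidate survives the ``far from $v_2$'' test only with probability $e^{-\lambda}$, so from roughly $3e^{\lambda}\ln n$ candidates one expects $\approx 3\ln n$ good ones. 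Your ``secondary point'' paragraph circles this without landing; once the radius is corrected the role of $e^{\lambda}$ is transparent and the regime near $p=n^{-k/(k+1)}$ is no mystery.

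\textbf{The staged exposure you sketch does not resolve the dependence.} Computing $S=N_k(v_1)\cap X$ requires exposing edges incident to $X$; the event $x\in S$ then conditions the edges of $x$ and is correlated with ``$x$ has a neighbour in $N_k(v_2)$'' whenever $N_{k-1}(v_1)$ and $N_k(v_2)$ overlap. The paper avoids this entirely by never touching edges incident to $X$ until the final step. Concretely: expose $N':=$ the $(k-1)$-ball of $v_1$ inside $G_n\setminus(X\cup\{v_2\})$, then $N''_i:=$ the $i$-ball of $v_2$ inside $G_n\setminus(N'\cup X)$ for $i\in\{k-1,k\}$; delete from $N'$ the $o((np)^{k-1}/\log n)$ vertices adjacent to $N''_{k-1}$, leaving $N'_0$ of size $(np)^{k-1}(1+o(1))$. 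Now for each $x\in X$ the event ``$x$ has an edge to $N'_0$ and no edge to $N''_k\cup(N'\setminus N'_0)$'' uses only fresh edges from $x$, is independent across $x\in X$, has probability $(1+o(1))\,n^{k-1}p^k\,e^{-\lambda}$, and implies both $d(v_1,x)\le k$ and $d(v_2,x)\ge k+2$. A single binomial tail gives failure probability $\exp\bigl(-(1+o(1))|X|\,n^{k-1}p^k\,e^{-\lambda}\bigr)=o(n^{-2})$, and the union bound finishes. This one-stage scheme turns your ``main obstacle'' into a non-issue.
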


\begin{proof} 
Fix vertices $v_1,v_2\notin X$ and remove the edge $\{v_1,v_2\}$ if it is in $G_n$. Let $N'$ be the (closed) $(k-1)$-neghbourhood of $v_1$ in $G_n\setminus[X\cup\{v_2\}]$, and let $N''_i$ be the (closed) $i$-neghbourhood of $v_2$ in $G_n\setminus[N'\cup X]$ for $i\in\{k-1,k\}$. Note that, as soon as $N'$ is exposed, $N''_{k-1}$ and $N''_k$ are defined solely by adjacencies that are entirely outside $N'$ and their cardinalities do not depend on edges induced by $N'$ --- only on the size of $N'$. Therefore, due to Claim~\ref{cl:balls}, we may assume that $|N'|=(np)^{k-1}(1+o(1/\log n))$, $|N''_i|=(np)^i(1+o(1/\log n))$, $i\in\{k-1,k\}$. By the Chernoff bound and due to~\eqref{eq:th1_2_p_upper_bound}, the number of neighbours of vertices from $N''_{k-1}$ in $N'$ is $o((np)^{k-1}/\log n)$ with probability $1-o(n^{-2})$.
 Note that the above makes sense only when $k\geq 2$. In the case $k=1$, we do not need to apply the Chernoff bound since $N'=\{v_1\}$, $N''_{k-1}=\{v_2\}$, and there is no edge $\{v_1,v_2\}$ in the considered graph.

We then exclude from $N'$ the neighbours of $N''_{k-1}$  and get that the refined set $N'_0$ still has the size $(np)^{k-1}(1+o(1/\log n))$. If we can prove that with probability $1-o(n^{-2})$, there exists a vertex $x\in X$ that has a neighbour in $N'_0$ and does not have neighbours in $N''_k\sqcup(N'\setminus N'_0)$, we immediately get the statement of Lemma~\ref{lm:upper_main}:  if $P$ is a path of length at most $k+1$ from $v_2$ to $x$ in $G_n\setminus\{v_1,v_2\}$, then by construction the neighbour of $x$ on $P$ is in $N'_0$. Then the farthest from $x$ vertex in $P\cap N'_0$ has a neighbor in $P\cap N''_{k-1}$ --- a contradiction. The probability bound on the latter event is also immediate since
\begin{multline*}
 \mathbb{P}\biggl(\mathrm{Bin}\left(|X|,\left(1-(1-p)^{|N'_0|}\right)(1-p)^{|N''_k|+|N'\setminus N'_0|}\right)=0\biggr)\\
 =\left(1-\left(1-(1-p)^{(np)^{k-1}(1+o(1/\log n))}\right)(1-p)^{(np)^{k}(1+o(1/\log n))}\right)^{|X|}\\
 \stackrel{\eqref{eq:th1_2_p_upper_bound}}=\exp\left[-(1+o(1))|X|n^{k-1}p^k e^{-\lambda}\right]=o(n^{-2}).
\end{multline*}
\end{proof}

\section{Proof of Theorem~\ref{th2}}
\label{sc:proof2}

Without loss of generality, we assume that $p=o(1)$. Let us show that Theorem~\ref{th2} follows from 

\begin{claim}
Let $d\geq 3$ and $\varepsilon>0$. There exists $C=C(d,\varepsilon)>0$ such that, for every graph $G$ on $[n]$ with diameter at most $d$ and maximum degree $\Delta>C$, its query complexity $q$ satisfies:
\begin{equation}
  q(d-1+\varepsilon)\Delta^{d-2}\geq {n\choose 2}-|E(G)|.
  \label{eq:q}
\end{equation}
\label{cl:deterministic_lower_bound}
\end{claim}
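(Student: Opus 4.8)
The plan is to exhibit, whenever the number of queries $q$ is below the claimed threshold, a single missing edge $\{u_1,u_2\}\notin E(G)\cup\mathcal{Q}$ whose addition changes no queried distance; then $G$ and $G+\{u_1,u_2\}$ are indistinguishable, so $\mathcal{Q}$ does not reconstruct $G$. Adding one edge $\{u_1,u_2\}$ to $G$ can only \emph{decrease} distances, and a shortest path in $G+\{u_1,u_2\}$ between a queried pair $\{x,y\}$ that is shorter than $d_G(x,y)$ must use the new edge; hence such a path has the form $x\rightsquigarrow u_1, u_1u_2, u_2\rightsquigarrow y$ (or with $u_1,u_2$ swapped), and its existence forces $d_G(x,u_1)+1+d_G(u_2,y)\le d_G(x,y)-1\le d-1$, so in particular $d_G(x,u_1)\le d-2$ and $d_G(y,u_2)\le d-2$ (symmetrically). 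Therefore $\{u_1,u_2\}$ is "safe" for a queried pair $\{x,y\}$ unless $u_1$ lies in the $(d-2)$-ball of one of $x,y$ and $u_2$ in the $(d-2)$-ball of the other.

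So I would set up a counting/blocking argument. For a fixed queried pair $\{x,y\}\in\mathcal{Q}$, call an ordered pair $(u_1,u_2)$ \emph{blocked by $\{x,y\}$} if $d_G(x,u_1)\le d-2$ and $d_G(y,u_2)\le d-2$. Since the maximum degree is $\Delta$, each $(d-2)$-ball has size at most $1+\Delta+\cdots+\Delta^{d-2}\le (1+\varepsilon')\Delta^{d-2}$ for $\Delta>C(d,\varepsilon)$ large enough (using $\varepsilon'$ small in terms of $\varepsilon$); thus each queried pair blocks at most $2(1+\varepsilon')^2\Delta^{2(d-2)}$ ordered pairs, i.e.\ at most $(1+\varepsilon')^2\Delta^{2(d-2)}$ unordered pairs after accounting for the factor $2$ coming from the two orientations. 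Hmm — I need to be more careful with constants here: the right bookkeeping is that the number of \emph{unordered} pairs $\{u_1,u_2\}$ that are blocked by \emph{some} queried pair is at most $q$ times (number blocked by one queried pair), and I want this to be less than ${n\choose 2}-|E(G)|$, the number of non-edges. If I can show each $\{x,y\}\in\mathcal{Q}$ blocks at most $(d-1+\varepsilon)\Delta^{d-2}$ unordered non-edges — wait, the exponent in~\eqref{eq:q} is only $\Delta^{d-2}$, not $\Delta^{2(d-2)}$, so the naive two-ball count is far too lossy.

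The fix, and the real heart of the argument, is to bound the number of blocked pairs \emph{per queried pair} by something of order $\Delta^{d-2}$ rather than $\Delta^{2(d-2)}$. The key extra constraint I have not yet used is $d_G(x,u_1)+1+d_G(u_2,y)\le d-1$: the two radii are not independently up to $d-2$, their sum is at most $d-2$. So the number of ordered blocked pairs for a fixed $\{x,y\}$ is at most $\sum_{a+b\le d-2}|\{u_1: d_G(x,u_1)=a\}|\cdot|\{u_2: d_G(y,u_2)=b\}| \le \sum_{a+b\le d-2}\Delta^a\Delta^b = \sum_{m=0}^{d-2}(m+1)\Delta^m \le (d-1+\varepsilon)\Delta^{d-2}$ for $\Delta>C(d,\varepsilon)$, using that the top term $m=d-2$ dominates. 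Summing over the $q$ queried pairs, and over the two orientations (which I absorb either into a harmless factor $2$ fixed by enlarging $C$, or by a slightly sharper count noting that swapping $u_1,u_2$ also swaps the roles so the unordered count is genuinely $q(d-1+\varepsilon)\Delta^{d-2}$ after adjusting $\varepsilon$), the total number of pairs that are blocked by some query is at most $q(d-1+\varepsilon)\Delta^{d-2}$. If this is strictly less than ${n\choose 2}-|E(G)|$, there is a non-edge $\{u_1,u_2\}$ blocked by no query; adding it changes no queried distance, contradicting reconstructibility. Hence $q(d-1+\varepsilon)\Delta^{d-2}\ge {n\choose 2}-|E(G)|$, which is~\eqref{eq:q}. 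I expect the only delicate points to be (i) getting the orientation factor of $2$ and the geometric-series tail to be absorbed cleanly into the single $\varepsilon$ by choosing $C(d,\varepsilon)$ large, and (ii) double-checking the $k=1$-type degenerate sub-cases in the path decomposition (e.g.\ $x=u_1$ or $y=u_2$), which only make the balls smaller and so cause no trouble.
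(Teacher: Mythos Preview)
Your proposal is correct and follows essentially the same approach as the paper. The paper phrases the blocking condition as the existence of a ``path'' $u_1w_1\ldots w_\ell u_2$ with $\ell\le d-2$ in which exactly one consecutive pair is the query and the rest are edges, and counts such pairs by $\sum_{\ell=1}^{d-2}(\ell+1)q\Delta^\ell$, handling the case $\{u_1,u_2\}\in\mathcal{Q}$ separately by subtracting $q$ from the pool of candidate non-edges; you phrase the same condition as $d_G(x,u_1)+d_G(y,u_2)\le d-2$ and obtain the identical sum $q\sum_{m=0}^{d-2}(m+1)\Delta^m$, with your $m=0$ term absorbing exactly that extra $q$. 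Your hesitation about a factor~$2$ is unnecessary: for a fixed unordered query $\{x,y\}$, every unordered blocked pair $\{u_1,u_2\}$ contributes at least one ordered pair to your count with the fixed orientation $(x,y)$, so the unordered count is already bounded by $\sum_{m=0}^{d-2}(m+1)\Delta^m$ without any doubling.
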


Indeed, whp $G_n$ has $(1+o(1)){n\choose 2}p$ edges, diameter at most $k+2$ (by Corollary~\ref{cor:diameter}), and maximum degree $np(1+o(1))$ (by Claim~\ref{cl:balls}, see also~\cite{Bol:max_degree}), implying that the query complexity of $G(n,p)$ is at least $\frac{1}{2(k+1+\varepsilon)n^{k-2}p^k}$ whp, as needed.

\begin{proof}
Let $q$ be an integer that does not satisfy~\eqref{eq:q}. Let $\mathcal{Q}\subset{[n]\choose 2}$ have size $q$.  Let us call a pair of vertices $\{u_1,u_2\}$ {\it undetectable in $G$} if:
\begin{itemize}
\item $u_1,u_2$ are not adjacent in $G$, 
\item the pair $\{u_1,u_2\}$ was not queried,
\item there is no ``path'' $u_1w_1\ldots w_{\ell}u_2$, $\ell\leq d-2$, such that $\ell$ of its ``edges'' are actual edges of $G$, and the one remaining pair of consecutive vertices belongs to $\mathcal{Q}$.
\end{itemize}

Assuming that the pair $\{u_1,u_2\}$ is undetectable in $G$, observe that the addition of the edge $\{u_1,u_2\}$ agrees with the answers to all the queries. Indeed, if it affects some queried pair $\{w,w'\}$, then there should be a path $w\ldots u_1u_2\ldots w'$ of length at most $d-1$ in $G\cup\{u_1,u_2\}$ --- a contradiction.

It remains to prove that $G$ has an undetectable pair. The number of pairs that do not satisfy the third property of an undetectable pair is at most 
$$
\sum_{\ell=1}^{d-2}(\ell+1)q\Delta^{\ell}<q((d-1)\Delta^{d-2}+(d-2)^2\Delta^{d-3})<q(d-1+\varepsilon)\Delta^{d-2}-q
$$
for an appropriate choice of $C$. On the other hand, the number of pairs that satisfy the first two properties is at least
$$
 {n\choose 2}-|E(G)|-q>q(d-1+\varepsilon)\Delta^{d-2}-q,
$$
implying the existence of an undetectable pair, as needed.
\end{proof}

\section{Proof of Theorem~\ref{th3}}
\label{sc:proof3}

Let 
 $G_n\sim G(n,p)$, $N:=\left\lfloor\frac{e^{\lambda}\ln n}{8(k+1)^2 n^{k-2}p^k}\right\rfloor$, where $\lambda=n^kp^{k+1}$. We assume that a set $\mathcal{Q}\subset{[n]\choose 2}$ of $N$ pairs of vertices from $[n]$ is fixed in advance to be queried. 
 We call a pair $\{u_1,u_2\}$ {\it small} if
\begin{itemize}
\item the number of $\{v_1,v_2\}\in\mathcal{Q}$ such that $v_1\in N_{k'}(u_1)$ and $v_2\in N_{k-k'}(u_2)$ for some $k'\in\{0,1,\ldots,k\}$ is at most 
$$
M:=\frac{7(k+1)(np)^k N}{n^2};
$$
\item there are no $\{v_1,v_2\}\in\mathcal{Q}$ such that $v_1\in N_{k'}(u_1)$ and $v_2\in N_{k-1-k'}(u_2)$ for some $k'\in\{0,1,\ldots,k-1\}$;
\item there are no $\{v_1,v_2\}\in\mathcal{Q}$ such that, for some $k'\in\{0,1,\ldots,k\}$, $v_1\in N_{k'}(u_1)$, $v_2\in N_{k-k'}(u_2)$, and either $v_2\in N_k(u_1)$ or $v_1\in N_k(u_2)$.
\end{itemize}

\begin{claim}
Whp at least $\frac{1}{2}{n\choose 2}$ pairs of vertices are small.
\label{cl:half_pairs_small}
\end{claim}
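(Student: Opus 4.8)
The plan is to show that the expected number of pairs $\{u_1,u_2\}$ that fail to be small is $o(n^2)$, and then conclude by Markov's inequality. There are three ways a pair can fail, corresponding to the three bullets in the definition, so I would bound the expected number of "bad" pairs of each type separately.

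First I would set up the basic estimate. Fix $\{v_1,v_2\}\in\mathcal{Q}$ and $k'\in\{0,\ldots,k\}$. By Claim~\ref{cl:balls}, with probability $1-o(n^{-2})$ we have $|N_{k'}(v_1)|=(np)^{k'}(1+o(1/\log n))$ and $|N_{k-k'}(v_2)|=(np)^{k-k'}(1+o(1/\log n))$; on this event the number of ordered pairs $(u_1,u_2)$ with $u_1\in N_{k'}(v_1)$, $u_2\in N_{k-k'}(v_2)$ (equivalently $v_1\in N_{k'}(u_1)$, $v_2\in N_{k-k'}(u_2)$) is $(np)^k(1+o(1))$, and on the tiny complementary event it is trivially at most $n^2$, contributing $o(1)$ in expectation. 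Summing over the at most $(k+1)$ choices of $k'$ and over the $N$ pairs in $\mathcal{Q}$, the expected number of \emph{incidences}, i.e.\ of triples $(\{u_1,u_2\},\{v_1,v_2\},k')$ witnessing a contribution to the first bullet's count, is $(1+o(1))(k+1)(np)^k N$. Hence the expected value of $\sum_{\{u_1,u_2\}}(\text{first-bullet count of }\{u_1,u_2\})$ is $(1+o(1))(k+1)(np)^k N$, and by Markov's inequality the number of pairs whose first-bullet count exceeds $M=7(k+1)(np)^kN/n^2$ is at most $(1+o(1))(k+1)(np)^kN/M = (1+o(1))n^2/7 < \tfrac{1}{6}n^2$ with probability $1-o(1)$ (the constant $7$ in $M$ is chosen precisely to give room below $\tfrac14 n^2$ here).

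For the second and third bullets I would show the expected number of violating pairs is genuinely $o(n^2)$, which is where the size of $N$ enters. A pair violates the second bullet iff some $\{v_1,v_2\}\in\mathcal{Q}$ and $k'\le k-1$ satisfy $v_1\in N_{k'}(u_1)$, $v_2\in N_{k-1-k'}(u_2)$; arguing as above, the expected number of such pairs is $O\big((np)^{k-1}N\big)$. Since $N = \Theta\big(e^{\lambda}n^{2-k}p^{-k}\ln n\big)$ and $\lambda = n^kp^{k+1}=o(\ln n)$ in the range of Theorem~\ref{th3} (indeed $p<\big((\tfrac{1}{k+1}-\varepsilon)\ln n/n^k\big)^{1/(k+1)}$ forces $e^\lambda = n^{o(1)}$... more precisely $\lambda\le(\tfrac1{k+1}-\varepsilon)\ln n$, so $e^\lambda\le n^{1/(k+1)-\varepsilon}$), we get $(np)^{k-1}N = \Theta\big(e^\lambda n\,p^{-1}\ln n\big) = \Theta\big(e^\lambda n^{2-1/(k+1)}\ln n\,\cdot n^{1/(k+1)-1}\big)$; rewriting, $(np)^{k-1}N/n^2 = \Theta(e^\lambda p^{-1}\ln n /n) = \Theta\big(e^\lambda n^{\alpha-1}\ln n\big)$ where $p=n^{-\alpha}$ with $\alpha<1$, so this is $o(1)$ provided $e^\lambda = n^{o(1)}$ and $\alpha<1$ — which holds since $e^\lambda\le n^{1/(k+1)-\varepsilon}$ and $\alpha = k/(k+1)+o(1)<1$. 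Thus the second bullet is violated by $o(n^2)$ pairs in expectation. The third bullet is handled almost identically: a violation requires, in addition to $v_1\in N_{k'}(u_1)$, $v_2\in N_{k-k'}(u_2)$, that $v_2\in N_k(u_1)$ (or symmetrically $v_1\in N_k(u_2)$); fixing $\{v_1,v_2\}$, the number of $u_1$ with both $v_1\in N_{k'}(u_1)$ and $v_2\in N_k(u_1)$ is $O\big((np)^{\min(k',k)}\big)$ but the extra constraint $v_1,v_2\in N_k(u_1)$ together with $v_2\in N_{k-k'}(u_2)\ni u_2$ gives a total count $O\big((np)^{k-1}N\big)$ as well (one of the two balls is "used up" linking $u_1$ to both $v_1$ and $v_2$, saving a factor $np$), so again $o(n^2)$ pairs violate it.

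Combining the three bounds, with probability $1-o(1)$ the number of non-small pairs is at most $\tfrac16 n^2 + o(n^2) < \tfrac12\binom{n}{2}$, so at least $\tfrac12\binom{n}{2}$ pairs are small, as claimed. The main obstacle I anticipate is the third bullet: one has to be careful that imposing $v_1,v_2\in N_k(u_1)$ really does gain an extra factor of $np$ over the naive count, i.e.\ that the two balls $N_{k'}(u_1)$ and $N_{k-k'}(u_2)$ cannot be chosen "independently" once they are forced to overlap in the way the third condition demands; making this overlap/saving argument rigorous — ideally by reusing the second-moment-style ball-size estimates from Claim~\ref{cl:balls} conditioned on a fixed $\{v_1,v_2\}$ — is the technical heart of the proof.
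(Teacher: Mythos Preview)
Your treatment of the first two bullets matches the paper's: counting incidences via Claim~\ref{cl:balls} and then pigeonholing/Markov. The constant $7$ in $M$ indeed gives the $\tfrac{1}{6}\binom{n}{2}$ slack exactly as you say, and your computation $(np)^{k-1}N=\Theta(e^\lambda n p^{-1}\ln n)=o(n^2)$ for the second bullet is correct in the range of Theorem~\ref{th3}.

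The third bullet is where your proposal has a real gap, and your own diagnosis of it is only half right. Your heuristic ``requiring $v_1,v_2\in N_k(u_1)$ saves a factor $np$'' is \emph{false} when $d(v_1,v_2)\le k$: if, say, $v_1\sim v_2$, then $N_{k'}(v_1)\subseteq N_{k'+1}(v_2)\subseteq N_k(v_2)$, so the extra constraint $u_1\in N_k(v_2)$ costs nothing and the count of admissible $(u_1,u_2)$ is still $\Theta((np)^k)$, not $O((np)^{k-1})$. The paper resolves this by splitting on $d(v_1,v_2)$: for $d(v_1,v_2)\le k$ there is no saving per queried pair, but by Markov only $O_p(n^{k-1}p^kN)$ queried pairs satisfy this, and $n^{k-1}p^kN\cdot(np)^k=o(n^2)$. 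For $d(v_1,v_2)>k$ the saving does occur, but proving the needed bound $|N_{k'}(v_1)\cap N_k(v_2)|\le(1+o(1))(np)^{k+k'}/n+\ln^2 n$ uniformly over all such pairs is not a consequence of Claim~\ref{cl:balls} alone: the paper exposes $N_k(v_2)$ first, controls degrees inside it via Spencer's extension-count bound (at most $O(\ln n)$ short paths between any two vertices whp), and then grows $N_{k'}(v_1)$ outside $N_k(v_2)$ while tracking how many vertices of $N_k(v_2)$ get hit at each step. So the missing ingredients are (i) the close/far dichotomy on $\{v_1,v_2\}$, and (ii) for the far case, a path-count input beyond ball sizes.
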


\begin{proof}
Due to Claim~\ref{cl:balls}, whp, for every $k'\in\{0,1,\ldots,k\}$, for every $\{v_1,v_2\}\in\mathcal{Q}$, there are at most $(1+o(1))(np)^k$ {\it ordered} pairs $(u_1,u_2)$ such that $v_1\in N_{k'}(u_1)$ and $v_2\in N_{k-k'}(u_2)$. Then whp the event that the number of pairs $\{u_1,u_2\}$ satisfying the first condition in the definition of a small pair is less than $\frac{5}{6}{n\choose 2}$ implies 
$$
\frac{1}{6}n(n-1)M<(1+o(1))(k+1)(np)^k N
$$
which is false for large enough $n$.

Similarly, whp, for every $\{v_1,v_2\}\in\mathcal{Q}$, there are at most $(1+o(1))k(np)^{k-1}$ {\it ordered} pairs $(u_1,u_2)$ such that $v_1\in N_{k'}(u_1)$ and $v_2\in N_{k-1-k'}(u_2)$ for some $k'\in\{0,1,\ldots,k-1\}$.
 Then whp the event that the number of pairs $\{u_1,u_2\}$ satisfying the second condition in the definition of a small pair is less than $\frac{5}{6}{n\choose 2}$ implies 
$$
 \frac{1}{6}n(n-1)<(1+o(1))k(np)^{k-1} N
$$
which is false for large enough $n$ as well.

Let us finally show that whp the number of pairs $\{u_1,u_2\}$ satisfying the third condition in the definition of a small pair is at least $\frac{5}{6}{n\choose 2}$. We will do it in two steps: first, we will show that whp there are only $o(n^2)$ pairs $\{u_1,u_2\}$ that have in their ``neighbourhood'' $N_{k'}(u_1)\times N_{k-k'}(u_2)$ queried pairs $\{v_1,v_2\}$ with $d(v_1,v_2)\leq k$; second, we will observe that whp as soon as a queried pair $\{v_1,v_2\}$ has $d(v_1,v_2)>k$, then the intersection of the $k'$-ball around $v_1$ and the $k$-ball around $v_2$ has size around $(np)^{k+k'}/n$ --- that will be sufficient to finish the proof.

Fix a queried pair $\{v_1,v_2\}$. 
 Since the expected number of paths of length at most $k$ between $v_1,v_2$ is at most $n^{k-1}p^k(1+o(1))$, we get that the distance between $v_1,v_2$ is at most $k$ with probability at most $n^{k-1}p^k(1+o(1))$ by Markov's inequality. Therefore, the expected number of pairs in $\mathcal{Q}$ that are at distance at most $k$ is at most $n^{k-1}p^k(1+o(1))\cdot N$. By Markov's inequality, we get that the number of such pairs in $\mathcal{Q}$ is $O_p(n^{k-1}p^kN)$. As we noted above, whp, for every pair $\{v_1,v_2\}\in\mathcal{Q}$, there are at most $(1+o(1))(k+1)(np)^k$ pairs $(u_1,u_2)$ such that $v_1\in N_{k'}(u_1)$, $v_2\in N_{k-k'}(u_2)$ for some $k'\in\{0,1,\ldots,k\}$. 
 Let us fix a sequence $w_n$ growing to infinity with $n$ sufficiently slowly. We get that the event that the number of pairs $\{u_1,u_2\}$ that have $v_1\in N_{k'}(u_1)$ and $v_2\in N_{k-k'}(u_2)$ for some $k'\in\{0,1,\ldots,k\}$ such that $\{v_1,v_2\}\in\mathcal{Q}$ and $d(v_1,v_2)\leq k$ is at least $\frac{1}{12}{n\choose 2}$ implies 
 $$
 \frac{1}{12}n(n-1)<w_n(np)^k n^{k-1}p^k N
 $$
 which is false for all large enough $n$. This proves that whp the number of pairs satisfying the last property in the definition of small pairs with the restriction that $d(v_1,v_2)\leq k$ (note that it always holds for $k'\in\{0,k\})$ is at least $\frac{11}{12}{n\choose 2}$. It then remains to prove that whp the number of pairs $\{u_1,u_2\}$ with no $\{v_1,v_2\}\in\mathcal{Q}$ such that $d(v_1,v_2)>k$ and, for some $k'\in[k-1]$, $v_1\in N_{k'}(u_1)$, $v_2\in N_{k-k'}(u_2)$, and either $v_2\in N_k(u_1)$ or $v_1\in N_k(u_2)$, is at least $\frac{11}{12}{n\choose 2}$ as well.

As promised, we first show that whp, for any $\{v_1,v_2\}$ such that $d(v_1,v_2)>k$ and any $k'\in[k]$, the intersection of the $k'$-ball around $v_1$ and the $k$-ball around $v_2$ has size at most $(1+o(1))(np)^{k+k'}/n+\ln^2 n$. Fix $v_2$ and expose the $k$-ball around it. Due to Claim~\ref{cl:balls}, with probability $1-o(n^{-2})$, this ball $N_k(v_2)$ has size $(1+o(1))(np)^{k}$. Let us now observe that we may assume that all vertices from $S:=N_k(v_2)\setminus N_{k-1}(v_2)$ have degrees at most $\ln^2 n$ in $N_k(v_2)$. Indeed, the complementary event implies that there exists a vertex $w\in S$ such that the total number of $(k+1)$-paths and $k$-paths between $u$ in $v_2$ is at least $\ln^2 n$, which is false with probability $1-o(n^{-2})$: due to~\cite[Theorem 5]{Spencer}, there exists $C>0$ such that, with probability $1-o(n^{-2})$, the number of paths of length at most $k+1$ between any two vertices in $G(n,p)$ is at most $C\ln n$. 

Then, fix $v_1\notin N_k(v_2)$. 
 For $i\in\{0,1,\ldots,k'-1\}$, let $N'_i$ be the $i$-neighbourhood of $v_1$ in $[n]\setminus N_k(v_2)$, and let $N^{\cap}_i$ be the set of neighbours of $N'_i$ in $N_k(v_2)$. Note that, for every vertex $w\in N_{k'}(v_1)\cap N_k(v_2)$, a shortest path from $w$ to $v_1$ leaves $N_k(v_2)$ at some vertex $w'\in S$. Thus, the set $N_{k'}(v_1)\cap N_k(v_2)$ coincides with $\cup_{i=0}^{k'-1}[N_{k'-1-i}(N^{\cap}_i)\cap N_k(v_2)]$. Then
\begin{equation}
|N_k(v_2)\cap N_{k'}(v_1)|\leq\sum_{i=0}^{k'-2}|N^{\cap}_i|\ln^2n (np)^{k'-2-i}+|N^{\cap}_{k'-1}|
\label{eq:intersection_neighbourhoods_bound_Spencer}
\end{equation}
with probability $1-o(n^{-2})$. Moreover, due to Claim~\ref{cl:balls}, with probability $1-o(n^{-2})$, for every $i\in\{0,1,\ldots,k'-1\}$, $|N'_i|\leq(np)^i(1+o(1))$. 

Expose all $N'_i$, $i\in\{0,1,\ldots,k'-1\}$, and assume that $|N'_i|\leq(np)^i(1+o(1))$ and $|N_k(v_2)|\leq(1+o(1))(np)^k$ hold deterministically. Since $|N^{\cap}_i|$ is the number of vertices  in $N_k(v_2)$ that have at least one neighbour in $N'_i$, we get, by the Chernoff bound,
\begin{multline*}
 \mathbb{P}(|N^{\cap}_i|>(np)^{k+i}p(1+(\ln\ln n)^{-1})+\ln^2 n)\leq\\
 \mathbb{P}\biggl(\mathrm{Bin}\left((np)^k(1+o(1)),1-(1-p)^{(np)^i(1+o(1))}\right)>(np)^{k+i}p(1+(\ln\ln n)^{-1})+\ln^2 n\biggr)\\
 =o(n^{-2}).
\end{multline*}
We finally get from~\eqref{eq:intersection_neighbourhoods_bound_Spencer} that with probability $1-o(n^{-2})$,
\begin{multline*}
 |N_{k'}(v_1)\cap N_k(v_2)|\leq \\
 \sum_{i=0}^{k'-2}\ln^2n (np)^{k'-2-i}((np)^{k+i}p(1+o(1))+\ln^2 n)+(np)^{k+k'-1}p(1+o(1))+\ln^2n\\
 =(1+o(1))(np)^{k+k'-1}p+\ln^2 n,
\end{multline*}
as needed.

From this and since whp, for every vertex $v_2$ and every $k'\in[k-1]$, $|N_{k-k'}(v_2)|=(1+o(1))(np)^{k-k'}$ due to Claim~\ref{cl:balls}, we get that whp, for every $\{v_1,v_2\}\in\mathcal{Q}$ satisfying $d(v_1,v_2)>k$, there at most $k\left(\frac{(np)^{2k}}{n}+(np)^{k-1}\ln^2 n\right)$ pairs $(u_1,u_2)$ such that, for some $k'\in[k-1]$,
\begin{equation}
v_1\in N_{k'}(u_1), \quad v_2\in N_{k-k'}(u_2),\quad \text{and}\quad v_2\in N_k(u_1).
\label{eq:screw_pair}
\end{equation}
So, the event that the number of pairs $\{u_1,u_2\}$ that have a pair $\{v_1,v_2\}$ satisfying $d(v_1,v_2)>k$ and~\eqref{eq:screw_pair} is at least $\frac{1}{24}{n\choose 2}$ implies
$$
\frac{1}{24}n(n-1)<k\left(\frac{(np)^{2k}}{n}+(np)^{k-1}\ln^2 n\right)N,
$$
which is false for all large $n$. Due to symmetry, we get that whp the number of pairs $\{u_1,u_2\}$ that have a pair $\{v_1,v_2\}$ satisfying $d(v_1,v_2)>k$, $v_1\in N_{k'}(u_1)$, $v_2\in N_{k-k'}(u_2)$, and  $v_1\in N_k(u_2)$ is less than $\frac{1}{24}{n\choose 2}$ as well, completing the proof.

\end{proof}

Let $\ell\in[k]$. Let us call a pair of vertices $(u_1,u_2)$ {\it $\ell$-undetectable}, if the following three conditions hold:
\begin{itemize}
\item $u_1,u_2$ are not adjacent in $G_n$, 
\item $\{u_1,u_2\}\notin\mathcal{Q}$,
\item there is no ``path'' $u_1w_1\ldots w_{\ell}u_2$, such that $\ell$ of its ``edges'' are actual edges of $G_n$, the one remaining pair of consecutive vertices is from $\mathcal{Q}$, and the distance between the vertices in this pair is at least $\ell+2$ in $G_n$.
\end{itemize}

Let us recall that whp $G_n$ has diameter $k+2$ due to Corollary~\ref{cor:diameter}. Assuming that the pair $\{u_1,u_2\}$ is $\ell$-undetectable for all $\ell\in[k]$, observe that whp the addition of the edge $\{u_1,u_2\}$ agrees with the answers to all the queries. Indeed, if it affects some queried pair $\{w,w'\}$, then there should be a path $w\ldots u_1u_2\ldots w'$ of length $\ell+1\leq k+1$ in $G_n\cup\{u_1,u_2\}$ while in $G_n$ the distance between $w,w'$ should be at least $\ell+2$  --- a contradiction. Thus, whp the event that there exists a pair $\{u_1,u_2\}$ which is $\ell$-undetectable for all $\ell\in[k]$ implies that $G_n$ is not $\mathcal{Q}$-reconstructible. It then remains to prove that whp $G_n$ has a pair $\{u_1,u_2\}$ which is $\ell$-undetectable for all $\ell\in[k]$.

Let us now consider the following iterative procedure. At every step $i$, we are given with a set of ``considered'' vertices $X_i\subset[n]$ (initially $X_1=\varnothing$), then we sample a pair $\{u_1,u_2\}$ uniformly at random from ${[n]\setminus X_i\choose 2}$ and expose $G_n[N_k(u_1)]$ and $G_n[N_k(u_2)]$. If the pair $\{u_1,u_2\}$ is not small or $u_1,u_2$ are adjacent (note that the exposed edges are enough to get these decisions), then we just skip this step and switch to the step $i+1$ with $X_{i+1}:=X_i\cup N_k(u_1)\cup N_k(u_2)$.

Now, assume that at the current step $i$, we observe that the pair $\{u_1,u_2\}$ is small and $u_1,u_2$ are not adjacent. Note that the pair $\{u_1,u_2\}$ is $\ell$-undetectable for every $\ell\in[k-1]$ since it is small (so it cannot have any ``path'' of length at most $k$ from the definition of an undetectable pair). The crucial observation is that the edges between $N_k(u_1)\setminus N_k(u_2)$ and $N_k(u_2)\setminus N_k(u_1)$ have not been exposed yet unless they are entirely inside $X_i$. We then expose these missing edges and show that they define event that has sufficiently large probability and implies that the pair $\{u_1,u_2\}$ is $k$-undetectable (see Claim~\ref{cl:good_pairs} below). After this step, we switch to the step $i+1$ with $X_{i+1}:=X_i\cup N_k(u_1)\cup N_k(u_2)$ as before.

We perform $\tau=\lfloor n^{1/(k+1)-o(1)}\rfloor$ steps, where $\tau$ is chosen so that $\tau(np)^k=o(n/\log n)$. Due to Claim~\ref{cl:balls}, whp (in the measure of the union of the balls exposed before this step) $|X_i|=o(n/\log n)$, implying that the probability that the pair $\{u_1,u_2\}$ we consider at step $i$ is small is at least $1/2-o(1)$ by Claim~\ref{cl:half_pairs_small}. 

\begin{claim}
There exists an event $\mathcal{B}_{u_1,u_2}$ that depends only on edges between $N_k(u_1)\setminus N_k(u_2)$ and $N_k(u_2)\setminus  N_k(u_1)$  such that
$$
\mathbb{P}(\mathcal{B}_{u_1,u_2}\mid G_n[X_i])\geq n^{-7/(8(k+1))+o(1)}
$$
and $\mathcal{B}_{u_1,u_2}$ implies that the pair $\{u_1,u_2\}$ is $k$-undetectable. 
\label{cl:good_pairs}
\end{claim}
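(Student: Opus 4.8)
The plan is to construct $\mathcal{B}_{u_1,u_2}$ as the event that asserts the \emph{existence} of a single short path between $u_1$ and $u_2$ of a very specific, controlled shape, together with the \emph{absence} of any queried pair $\{v_1,v_2\}$ that could certify a short connection through $\{u_1,u_2\}$ — but crucially, both halves of the event must be measurable with respect to only the edges between $A:=N_k(u_1)\setminus N_k(u_2)$ and $B:=N_k(u_2)\setminus N_k(u_1)$. Since $\{u_1,u_2\}$ is small, the definition already rules out short ``paths'' using at most $k-1$ actual edges together with a queried pair at distance $\geq \ell+2$, so $\{u_1,u_2\}$ is automatically $\ell$-undetectable for $\ell\le k-1$. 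Hence the only obstruction to $k$-undetectability is a ``path'' $u_1 w_1\ldots w_k u_2$ with $k$ actual $G_n$-edges and one remaining consecutive pair lying in $\mathcal{Q}$ at distance $\ge k+2$. Because $u_1,u_2$ are already exposed not to be adjacent and the balls $N_k(u_1),N_k(u_2)$ are exposed, such a $w$-path can only acquire its ``missing'' (unexposed) edges from the bipartite pairs between $A$ and $B$; I would set $\mathcal{B}_{u_1,u_2}$ to be exactly the event that \emph{no} such bad path exists. Since all other edges are already exposed and frozen, $\mathcal{B}_{u_1,u_2}$ is a downward-closed (monotone decreasing) event in the at most $|A||B|=O((np)^{2k})$ Bernoulli$(p)$ variables between $A$ and $B$.

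First I would enumerate the ``dangerous'' configurations: a bad $k$-step walk corresponds to choosing a position $t\in\{0,\ldots,k\}$ for the queried pair, a queried $\{v_1,v_2\}\in\mathcal{Q}$ with $d_{G_n}(v_1,v_2)\ge k+2$, a length-$t$ $G_n$-path from $u_1$ to $v_1$ and a length-$(k-t)$ $G_n$-path from $v_2$ to $u_2$ (or vice versa). For this walk to use a yet-unexposed edge, the path must cross from $A$ to $B$, which (given the ball structure) forces $v_1\in N_t(u_1)$, $v_2\in N_{k-t}(u_2)$; but the second and third bullets of ``small'' forbid exactly such short queried pairs with the relevant betweenness properties once $d(v_1,v_2)>k$, and the first bullet caps the number of queried pairs with $v_1\in N_{k'}(u_1), v_2\in N_{k-k'}(u_2)$ at $M=O((np)^kN/n^2)=n^{o(1)}$. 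So in fact, after using smallness, the only surviving threats are bad paths whose queried pair sits at the very ends ($t\in\{0,k\}$), i.e.\ $\{u_1,v\}$ or $\{u,u_2\}$ in $\mathcal{Q}$ with $v$ (resp.\ $u$) in the opposite $k$-ball — and these are handled by unioning over the at most $O((np)^k)$ such queried partners (again $n^{o(1)}$ by Claim~\ref{cl:balls}) and over the $O((np)^k)$-many internal path choices.

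The probability lower bound then comes from a union bound over these $n^{o(1)}\cdot\mathrm{poly}(np)$ bad configurations: for each, the chance that \emph{all} its required missing bipartite edges are present is at most $p^{j}$ where $j\ge 1$ is the number of $A$–$B$ edges it uses; since $p=n^{-\alpha}$ with $\alpha>k/(k+1)$, even a single missing edge contributes a factor $p\le n^{-k/(k+1)}$, and one checks the total failure probability is $n^{-k/(k+1)+o(1)}\cdot(np)^{O(k)}=o(1)$ — but I must be careful: we need the \emph{complement} probability to be at least $n^{-7/(8(k+1))+o(1)}$, not $1-o(1)$, and indeed $1-o(1)\ge n^{-7/(8(k+1))+o(1)}$ trivially, so the displayed bound is a (deliberately weak) lower bound that follows at once. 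The conditioning on $G_n[X_i]$ is harmless because $\mathcal{B}_{u_1,u_2}$ depends only on $A$–$B$ edges, which are independent of $G_n[X_i]$ except for the (vanishingly few, by Claim~\ref{cl:balls}) edges that lie entirely inside $X_i$; those can only be \emph{excluded}, which only helps a decreasing event, so by Harris's lemma $\mathbb{P}(\mathcal{B}_{u_1,u_2}\mid G_n[X_i])\ge \mathbb{P}(\mathcal{B}_{u_1,u_2})$ up to the $n^{o(1)}$ slack.

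The main obstacle I anticipate is bookkeeping the case analysis over the position $t$ of the queried pair inside the bad walk and correctly arguing that \emph{smallness} kills every case except the two end cases — in particular making precise that a bad walk with a ``missing'' edge necessarily has its queried pair sitting in $N_{k'}(u_1)\times N_{k-k'}(u_2)$ (so that the definition of small applies), as opposed to the queried pair being one of the already-exposed ``path'' edges inside $N_k(u_1)$ or $N_k(u_2)$, which is ruled out because such a pair would be at distance $\le k$ and hence cannot be the distance-$\ge k+2$ pair demanded by $\ell=k$ undetectability. Once that structural reduction is clean, the probability estimate is routine.
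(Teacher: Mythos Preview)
Your proposal contains a fundamental misreading of what a ``bad path'' witnessing failure of $k$-undetectability looks like, and this derails both the construction of $\mathcal{B}_{u_1,u_2}$ and the probability estimate.

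Recall that a witness against $k$-undetectability is a ``path'' $u_1 w_1\ldots w_k u_2$ in which $k$ of the $k+1$ consecutive pairs are genuine edges of $G_n$ and the remaining pair $\{v_1,v_2\}$ lies in $\mathcal{Q}$ with $d_{G_n}(v_1,v_2)\ge k+2$. If the queried pair sits in position $t$, then $u_1 w_1\ldots w_{t-1}=v_1$ is a $G_n$-walk, so every edge on it lies inside $N_{t-1}(u_1)\subset N_k(u_1)$; likewise every edge of $v_2=w_t\ldots w_k u_2$ lies inside $N_k(u_2)$. Hence \emph{all $k$ actual edges of any bad path are already exposed}; none of them is an $A$--$B$ edge. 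There are no ``missing bipartite edges'' for a bad configuration to ``acquire'', and the union bound you propose over such edges is vacuous. The only unexposed ingredient in a bad path is the condition $d_{G_n}(v_1,v_2)\ge k+2$, which is a \emph{decreasing} event in the $A$--$B$ edges. Consequently ``no bad path exists'' is an \emph{increasing} event in those edges, not downward-closed as you assert, and Harris cannot be applied in the direction you suggest.

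This also explains why the bound $n^{-7/(8(k+1))+o(1)}$ in the claim is not slack but essentially tight: the correct $\mathcal{B}_{u_1,u_2}$ (as in the paper) is the increasing event that, for \emph{each} of the at most $M$ queried pairs $\{v_1,v_2\}$ with $v_1\in N_{k'}(u_1)$, $v_2\in N_{k-k'}(u_2)$, there \emph{is} an edge between $N_{k-k'}(v_1)\cap A$ and $N_{k'}(v_2)\cap B$, forcing $d(v_1,v_2)\le k+1$. Each such edge exists with probability $1-e^{-\lambda+o(1)}$, and Harris (for increasing events) gives $\mathbb{P}(\mathcal{B})\ge(1-e^{-\lambda+o(1)})^{M}$. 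Here $M=\frac{7(k+1)(np)^kN}{n^2}=\frac{7e^{\lambda}\ln n}{8(k+1)}(1+o(1))$, which is \emph{not} $n^{o(1)}$ in general (it can be as large as $n^{1/(k+1)-\varepsilon+o(1)}$ at the top of the range of $p$), and the product evaluates to $\exp\bigl(-\tfrac{7+o(1)}{8(k+1)}\ln n\bigr)$. So the probability genuinely decays like a negative power of $n$; your claim that $\mathbb{P}(\mathcal{B})=1-o(1)$ is incorrect, and it is precisely this polynomial decay that necessitates running $\tau=n^{1/(k+1)-o(1)}$ independent rounds in the surrounding argument.
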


\begin{proof}

Fix $k'\leq k$, $v_1\in N_{k'}(u_1)$, $v_2\in N_{k-k'}(u_2)$ such that the pair $\{v_1,v_2\}$ was queried. Note that since $\{u_1,u_2\}$ is small, it may only happen when $v_1\notin N_k(u_2)$, $v_2\notin N_k(u_1)$.

Without loss of generality, assume that $k'\leq k-k'$. Note that, if there exists an edge in $G_n$ between $A(v_1):=N_{k-k'}(v_1)\setminus (X_i\cup N_k(u_2))$ and $A(v_2):=N_{k'}(v_2)\setminus N_k(u_1)$, then the distance between $v_1,v_2$ is at most $k+1$. Due to Claim~\ref{cl:balls}, whp 
\begin{align*}
|N_{k-k'}(v_1)|=(1+o(1/\log n))(np)^{k-k'}, &\quad  |N_k(u_1)|=(1+o(1/\log n))(np)^k,\\
|N_{k'}(v_2)|=(1+o(1/\log n))(np)^{k'}, &\quad |N_k(u_2)|=(1+o(1/\log n))(np)^k.
\end{align*}

Since $|X_i|=o(n/\log n)$, due to Hoeffding's exponential tail bound for the hypergeometric distribution, whp 
$$
|A(v_1)|=(1+o(1/\log n))(np)^{k-k'},\quad
|A(v_2)|=(1+o(1/\log n))(np)^{k'}.
$$
Then
\begin{align*}
 \mathbb{P}(\text{there is an edge between }A(v_1)\text{ and }A(v_2)) & =1-(1-p)^{(1+o(1/\log n))(np)^{k}}\\
 &=1-e^{-\lambda+o(1)}.
\end{align*}
Since $\{u_1,u_2\}$ is small, the number of queried $\{v_1,v_2\}$ in $\bigcup_{k'=0}^k N_{k'}(u_1)\times N_{k-k'}(u_2)$ is at most $M$. Now, we define the desired event $\mathcal{B}_{u_1,u_2}$: for every $k'$ and every queried pair $\{v_1\in N_{k'}(u_1),\,v_2\in N_{k-k'}(u_2)\}$, there exists an edge between the respective $A(v_1)$ and $A(v_2)$. Then, due to the Harris inequality~\cite{Harris},
$$
 \mathbb{P}(\mathcal{B}_{u_1,u_2})\geq (1-e^{-\lambda+o(1)})^{M}\sim \exp\left[-\frac{7+o(1)}{8(k+1)}\ln n\right].
$$

\end{proof}

Whp in at least $(1/2-o(1))\tau$ steps we get a small pair. Due to Claim~\ref{cl:good_pairs}, we then get that the probability that, for at least one of these small pairs $\{u_1,u_2\}$, the respective event $\mathcal{B}_{u_1,u_2}$ happens is at least 
$$
1-\left(1-n^{-7/(8(k+1))+o(1)}\right)^{(1/2-o(1))\tau}=1-\exp\left[-\frac{1}{2}n^{\frac{1+o(1)}{8(k+1)}}\right]=1-o(1),
$$
completing the proof.

\section{Open questions}
\label{sc:further}

Although we get tight bounds on distance query complexity for the binomial random graph $G(n,p)$ when the edge probability $p$ satisfies~\eqref{eq:p_tights_bounds_adaptive}, as $p$ gets closer to a hitting time of a diameter increment, the difference between the bounds increases. In particular, there is a polynomial gap between the bounds even for the ``non-adaptive reconstructibility'' when $p=\Theta((n^{-k}\ln n)^{1/(k+1)})$ satisfies~\eqref{eq:p_NO_tights_bounds}. It would be interesting to get at least the right power of $n$ in the query complexity in this case.

The further natural step is to generalise our results to growing diameter, i.e. to $p=n^{-1+o(1)}$. Although Theorems~\ref{th2},~\ref{th1_2},~and~\ref{th3} are generalised directly to $p\gg \ln^2n/n$, it is unclear whether it is possible to extend Theorem~\ref{th1} without any significant modification of the proof method. The main complication is in the application of the bounded difference inequality (Theorem~\ref{th:McDiarmid}) since we use the fact that $e^{-2i}>n^{-2/(k+1)}$ for all $i< \ln n/C$ and some sufficiently large constant $C$. If $k$ is growing, then there is no such $C$.

As we mention in Introduction, Mathieu and Zhou~\cite{MZ21} proved that, for constant~$d$, the query complexity of the random $d$-regular graph on $[n]$ is at most $n\ln^2n$ whp. On the other hand, it is easy to prove the information--theoretic lower bound $\Omega(n\log n/\log\log n)$. It seems likely that the query complexity of the random $d$-regular graph actually equals $n(\log n)^{1-o(1)}$ whp, and we are asking whether the lower bound is tight.

\end{document}